\theoremstyle{plain}
\newtheorem{theorem}{Theorem}[section]
\newtheorem{lemma}[theorem]{Lemma}
\newtheorem{corollary}[theorem]{Corollary}
\theoremstyle{definition}
\newtheorem{example}[theorem]{Example}
\theoremstyle{remark}
\newtheorem{remark}{Remark}
\begin{document}


\title{Kaczmarz-Type Methods for Solving Matrix Equations}

\author{
\name{Weiguo Li\textsuperscript{a}\thanks{CONTACT Weiguo Li. Email: liwg@upc.edu.cn}, Wendi Bao\textsuperscript{a}\thanks{CONTACT Wendi Bao. Email: baowd@upc.edu.cn}, Lili Xing\textsuperscript{a}\thanks{CONTACT Lili Xing. Email: xinglily2010@upc.edu.cn}, and Zhiwei Guo\textsuperscript{a}}\thanks{CONTACT Zhiwei Guo. Email: gzw\_13605278246@163.com}
\affil{\textsuperscript{a}College of Science, China University of Petroleum, Qingdao 266580, P .R. China}
}

\maketitle

\begin{abstract}
In this paper, several Kaczmarz-type numerical methods for solving the matrix equation $AX=B$ and $XA=C$ are proposed, where the coefficient matrix $A$ may be full rank or rank deficient. These methods are iterative methods without matrix multiplication. Theoretically, the convergence of these methods is proved. The numerical results show that these methods are more efficient than iterative methods involving matrix multiplication for high-dimensional matrices.
\end{abstract}

\begin{keywords}
Kaczmarz method; Coordinate-Descent method; Convergence; Matrix equation; inverse
\end{keywords}

\section{Introduction}

Matrix equations play a significant role in various mathematical fields such as differential
equations, algebra, probability and statistics, calculus of several variables, biological sciences, economics and management
studies. In applied mathematics, the development and analysis of the various characteristics of iterative algorithms for solving the matrix equation is an active area of present research. See some recent references, such as \cite{YY10,LX18,HL23,ZHL23}.

Many methods among these frequently use the matrix-matrix product operation, which consumes a lot of computing time. In this paper, Kaczmarz and coordinate descend methods \cite{K37,WSJ15} are used to obtain solutions of matrix equations by the product of matrix and vector. All the results in this paper hold in the complex field. But for the sake of simplicity, we only discuss it in the real number field.

In this paper, we denote $A^T$,  $A^+$, $r(A)$, $R(A)$, $\|A\|_F$, and $\langle A, B\rangle= trace(A^TB)$ as the transpose, the Moore-Penrose generalized inverse (abbreviated as MP inverse), the rank of $A$, the column space of $A$, the Frobenius
norm of $A$ and the inner product of two matrices $A$ and $B$, respectively. We indicate by $I_n$ the identity matrix in $R^{n\times n}$. In addition, for a given matrix $G=(g_{ij})\in R^{m\times n}$, $G_{i,:}$, $G_{:,j}$ and $\sigma_{min}(G)$, are used to denote its $i$th row, $j$th column and the smallest nonzero singular value of $G$ respectively. Let $E_k$ denote the expected value conditional on the first k iterations, that is,
$$E_k[\cdot]=E[\cdot|j_0,j_1,...,j_{k-1}],$$ where $j_s(s=0,1,...,k-1)$ is the column chosen at the $s$th iteration.

The organization of this paper is as follows. In Section 2, we discuss the Kaczmarz method to solve the consistent matrix equations $AX=B$ and $XA=C$. In Section 3, we discuss the coordinate descent (CD) method for solving the inconsistent matrix equations $AX=B$ and $XA=C$ in the case of full column or row rank. In Section 4, we discuss the extended Kaczmarz methods and the extended coordinate descent methods for solving the inconsistent matrix equations $AX=B$ and $XA=C$. In Section 5, using the advantages of the row by row or column by column orthogonal projection of Kaczmarz-type iteration, some recursive strategies for solving matrix equations are given. In Section 6, some numerical examples are provided to illustrate the effectiveness of our new methods. Finally, some brief concluding remarks are described in Section 7.

\section{Solving Consistent Matrix Equation $AX=B$ and $XA=C$ by Kaczmarz Method}

Considering the following consistent matrix equation
\begin{equation}\label{e201}
	AX=B,
\end{equation}
where $A\in R^{m\times n}$ and $B\in R^{m\times p}$. So there exists an $X^*\in R^{n\times p}$ satisfying $AX^*=B$. In general, the equation (\ref{e201}) has multiple solutions. Now we try to find its minimal $F$-norm solution $A^+B$ by Kaczmarz method.

The classical Kaczmarz method which was introduced in 1937 \cite{K37} is a row projection iterative algorithm
for solving a consistent system $Ax = b$ where $A\in R^{m\times n}$, $b\in R^m$ and $x\in R^n$. This method involves only a single equation per iteration as follows which converges to the least norm solution of $Ax = b$ with an initial iteration $x_0\in R(A^T)$,
\begin{equation*}\label{e202}
	x^{(k+1)}=x^{(k)}+\frac{b_i-A_{i,:}x^{(k)}}{\|A_{i,:}\|_2^2}(A_{i,:})^T, \  \ k\ge 0,
\end{equation*}
where $i=(k \ mod \ m) + 1$. If we iterate the system of linear equations $AX_{:,j}=B_{:,j}$, $j=1, \cdots, p$ simultaneously and denote $X^{(k)}=[X_{:,1}^{(k)}, X_{:,2}^{(k)}, \cdots, X_{:,p}^{(k)}]$, then we get
\begin{equation}\label{e203}
	X^{(k+1)}=X^{(k)} +\frac{(A_{i,:})^T}{\|A_{i,:}\|_2^2}(B_{i,:}-A_{i,:}X^{(k)}), \  \ k\ge 0,
\end{equation}
where $i=(k \ mod \ m) + 1$ and $A_{i,:}X^{(k+1)}=B_{i,:}$ holds, that is, $X^{(k+1)}$ is a projection of $X^{(k)}$ onto the subspace $H_i=\left\{X\in R^{n\times p}: \ A_{i,:}X=B_{i,:}\right\}$. So we obtain the following randomized Kaczmarz (RK) method for consistent matrix equation $AX=B$ (RKCAX).
\begin{algorithm}
	\leftline{\caption{RK Method for Consistent Matrix Equation $AX=B$ (RKCAX)\label{alg21}}}
	\begin{algorithmic}[1]
		\Require
		$A\in R^{m\times n}$, $B\in R^{m\times p}$, $X^{(0)}\in R^{n\times p}$, $K\in R$
		\State For $i=1:m$, $M(i)=\|A_{i,:}\|_2^2$
		\For {$k=0,1,2,\cdots, K-1$}
		\State Set $p_{row=i}=\frac{\|A_{i,:}\|_2^2}{\|A\|^2_F}$
		\State $X^{(k+1)}=X^{(k)}+\frac{(A_{i,:})^T}{M(i)}(B_{i,:}-A_{i,:}X^{(k)})$
		\EndFor
		\State Output $X^{(K)}$
	\end{algorithmic}
\end{algorithm}

The cost of each iteration of this method is $4np+n$ if the square of the row norm of $A$ has been calculated in advance.
In the following theorem, with the idea of the RK method \cite{SV09}, we will prove that iteration (\ref{e203}) will converges to the the least $F$-norm solution of $AX=B$ if $i$ is picked at random.

\begin{lemma}\label{l201}
	If $X\in R(A^T)$, we have
	\begin{equation*}\label{e204}
		\|AX\|_F^2\geq\sigma_{min}^2(A)\|X\|_F^2.
	\end{equation*}
\end{lemma}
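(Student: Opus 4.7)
The plan is to reduce the matrix inequality to the classical vector inequality column by column, then prove the vector inequality via the SVD of $A$.

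First I would exploit the identities
\begin{equation*}
\|AX\|_F^2 = \sum_{j=1}^{p} \|A X_{:,j}\|_2^2, \qquad \|X\|_F^2 = \sum_{j=1}^{p} \|X_{:,j}\|_2^2,
\end{equation*}
so that the claim will follow if I can show $\|A x\|_2^2 \ge \sigma_{\min}^2(A)\,\|x\|_2^2$ for every vector $x \in R(A^T)$. Note that the hypothesis $X \in R(A^T)$ is naturally interpreted columnwise: each $X_{:,j}$ lies in $R(A^T)$.

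Next I would prove the vector statement using the (thin) SVD $A = U \Sigma V^T$, writing $V = [V_r,\, V_0]$ where $V_r$ collects the right singular vectors corresponding to the $r = r(A)$ nonzero singular values, and similarly for $U$ and $\Sigma_r = \mathrm{diag}(\sigma_1, \ldots, \sigma_r)$ with $\sigma_r = \sigma_{\min}(A)$. Since $R(A^T)$ is spanned by the columns of $V_r$, any $x \in R(A^T)$ can be written as $x = V_r y$ with $\|x\|_2 = \|y\|_2$. Then
\begin{equation*}
\|A x\|_2^2 = \|U_r \Sigma_r V_r^T V_r y\|_2^2 = \|\Sigma_r y\|_2^2 \ge \sigma_{\min}^2(A) \|y\|_2^2 = \sigma_{\min}^2(A) \|x\|_2^2.
\end{equation*}

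Finally, summing this inequality over $j = 1, \ldots, p$ applied to $x = X_{:,j}$ yields the claimed bound. I do not anticipate a real obstacle; the only care needed is in the definition of $\sigma_{\min}(A)$ as the smallest \emph{nonzero} singular value, which is precisely what makes the restriction of $A$ to $R(A^T)$ bounded below by $\sigma_{\min}(A)$ even when $A$ is rank deficient.
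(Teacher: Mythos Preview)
Your argument is correct: reducing to the column-wise vector inequality and then using the thin SVD with $R(A^T)=\mathrm{span}(V_r)$ cleanly yields $\|Ax\|_2^2\ge\sigma_{\min}^2(A)\|x\|_2^2$ for $x\in R(A^T)$, with the key point being that $\sigma_{\min}(A)$ is the smallest \emph{nonzero} singular value. The paper itself states this lemma without proof, treating it as a standard fact, so there is no alternative argument to compare against; your SVD-based proof is the natural one and would fill the gap.
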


\begin{theorem}\label{t201}
	The sequence $\{X^{(k)}\}$ generated by Algorithm \ref{alg21} starting from the initial matrix $X^{(0)}$, converges linearly to $X^*=A^+B+(I_n-A^+A)X^{(0)}$ in mean square and the following relationship holds for arbitrary $A\in R^{m\times n}$ and $B\in R^{m\times p}$
	\begin{equation}\label{e205}
		E[\|X^{(k)}-X^*\|_F^2]\leq\left(1-\frac{\sigma_{min}^2(A)}{\|A\|^2_F}\right)^k\|X^{(0)}-X^*\|_F^2,
	\end{equation}
	where the $i$th row of $A$ is selected with probability $p_{row=i}=\frac{\|A_{i,:}\|_2^2}{\|A\|^2_F}$.
\end{theorem}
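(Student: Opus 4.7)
The plan is to mimic the Strohmer--Vershynin analysis of the scalar randomized Kaczmarz method, with the scalar residuals replaced by the rows of matrices and the $\ell_2$-norm replaced by the Frobenius norm.

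First I would check that the claimed limit $X^{*}=A^{+}B+(I_n-A^{+}A)X^{(0)}$ really is a solution of $AX=B$, using $AA^{+}A=A$ so that $AX^{*}=AA^{+}B=B$ (the last equality uses consistency, $B\in R(A)$). This is the only place where consistency enters. Then I would rewrite one step of the iteration as an affine projection: since $A_{i,:}X^{*}=B_{i,:}$, subtracting $X^{*}$ from both sides of (\ref{e203}) gives
\begin{equation*}
X^{(k+1)}-X^{*}=\Bigl(I_n-\tfrac{(A_{i,:})^{T}A_{i,:}}{\|A_{i,:}\|_{2}^{2}}\Bigr)(X^{(k)}-X^{*}),
\end{equation*}
so $X^{(k+1)}-X^{*}$ is obtained from $X^{(k)}-X^{*}$ by applying an orthogonal projector (acting on each column of the matrix). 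Using $(I_n-P)^{T}(I_n-P)=I_n-P$ and a short trace computation,
\begin{equation*}
\|X^{(k+1)}-X^{*}\|_{F}^{2}=\|X^{(k)}-X^{*}\|_{F}^{2}-\frac{\|A_{i,:}(X^{(k)}-X^{*})\|_{2}^{2}}{\|A_{i,:}\|_{2}^{2}}.
\end{equation*}

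Next I would take the conditional expectation $E_k[\,\cdot\,]$ using the sampling probability $p_{\text{row}=i}=\|A_{i,:}\|_{2}^{2}/\|A\|_{F}^{2}$. The weights cancel the denominator, and summing the squared row residuals reassembles $\|A(X^{(k)}-X^{*})\|_{F}^{2}$, giving
\begin{equation*}
E_k\bigl[\|X^{(k+1)}-X^{*}\|_{F}^{2}\bigr]=\|X^{(k)}-X^{*}\|_{F}^{2}-\frac{\|A(X^{(k)}-X^{*})\|_{F}^{2}}{\|A\|_{F}^{2}}.
\end{equation*}

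To convert this into a geometric contraction I need to invoke Lemma \ref{l201}, and this is the step that requires the most care, since the lemma applies only to matrices whose columns lie in $R(A^{T})$. I would prove the invariant $X^{(k)}-X^{*}\in R(A^{T})$ (column-wise) by induction: for the base case, $X^{(0)}-X^{*}=A^{+}AX^{(0)}-A^{+}B=A^{+}(AX^{(0)}-B)$, whose columns lie in $R(A^{+})=R(A^{T})$; for the inductive step, the increment $X^{(k+1)}-X^{(k)}$ has columns in $\mathrm{span}\{(A_{i,:})^{T}\}\subseteq R(A^{T})$ by construction. With the invariant in hand, Lemma \ref{l201} bounds $\|A(X^{(k)}-X^{*})\|_{F}^{2}\ge\sigma_{\min}^{2}(A)\|X^{(k)}-X^{*}\|_{F}^{2}$, yielding the per-step contraction factor $1-\sigma_{\min}^{2}(A)/\|A\|_{F}^{2}$.

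Finally I would take full expectation, apply the law of total expectation to absorb $E_k$ into $E$, and iterate the contraction $k$ times from the deterministic starting matrix $X^{(0)}$ to obtain (\ref{e205}). I expect the only real subtlety to be the invariance argument in the previous paragraph; everything else is a direct matrix-valued translation of the scalar randomized Kaczmarz proof.
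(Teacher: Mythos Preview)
Your proposal is correct and follows essentially the same route as the paper's proof: rewrite the update as an orthogonal projector acting on the error, take the conditional expectation with the row-norm weights to produce $\|X^{(k)}-X^{*}\|_F^2-\|A(X^{(k)}-X^{*})\|_F^2/\|A\|_F^2$, verify the invariant $X^{(k)}-X^{*}\in R(A^{T})$ via $X^{(0)}-X^{*}=A^{+}(AX^{(0)}-B)$, apply Lemma~\ref{l201}, and iterate. The paper's argument is terser but structurally identical, so there is nothing substantive to add.
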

\begin{proof}
	Obviously, (\ref{e203}) is equivalent to the following expression
	\begin{equation}\label{e206}
		X^{(k+1)}-X^*=X^{(k)}-X^*+\frac{(A_{i,:})^T(A_{i,:}X^*-A_{i,:}X^{(k)})}{\|A_{i,:}\|_2^2}=\left(I_n-\frac{(A_{i,:})^TA_{i,:}}{\|A_{i,:}\|_2^2}\right)(X^{(k)}-X^*),
	\end{equation}
	where $AX^*=AA^+B=B$ (consistent), that is, $A_{i,:}X^*=A_{i,:}A^+B=B_{i,:}$, $i=1,\cdots,m$.
	
	Since $X^{(0)}-X^*=A^+(AX^{(0)}-B)\in R(A^T)$, it is easy to see from (\ref{e206}) that $X^{(k)}-X^*\in R(A^T)$ for all $k=0,1,\cdots$. And we know
	\begin{align}\label{e207}
		\sum_{i=1}^m  \frac{\|A_{i,:}\|^2_2}{ \|A\|^2_F}\left\|\left(I-\frac{(A_{i,:})^TA_{i,:}}{\|A_{i,:}\|^2_2}\right)(X^{(k)}-X^*)\right\|_F^2
		=\|X^{(k)}-X^*\|_F^2 -  \frac{\|A(X^{(k)}-X^*)\|_F^2}{\|A\|^2_F},
	\end{align}
	and by Lemma \ref{l201}, we have
	\begin{equation*}\label{e208}
		\|X^{(k)}-X^*\|_F^2 -  \frac{\|A(X^{(k)}-X^*)\|_F^2}{\|A\|^2_F}\leq \left(1-\frac{\sigma^2_{\min}(A)}{\|A\|_F^2}\right)\|X^{(k)}-X^*\|_F^2.
	\end{equation*}
	Therefore, we get
	\begin{equation}\label{e209}
		E[\|X^{(k+1)}-X^*\|_F^2]\leq \left(1-\frac{\sigma^2_{\min}(A)}{\|A\|^2_F}\right)E[\|X^{(k)}-X^*\|_F^2],
	\end{equation}
	where the $i$th row of $A$ is selected with probability $p_i(A)=\frac{\|A_{i,:}\|_2^2}{\|A\|^2_F}$. From (\ref{e209}), we immediately get the convergence of sequence $\{X^{(k)}\}$.
\end{proof}

\begin{corollary}\label{c201}
	The sequence $\{X^{(k)}\}$ generated by Algorithm \ref{alg21} converges linearly to $X^*=A^+B$ in mean square if $X^{(0)}_{:,j}\in R(A^T)$, $j=1,\cdots,p$ and
	\begin{equation*}\label{e210}
		E[\|X^{(k)}-A^+B\|_F^2]\leq\left(1-\frac{\sigma_{min}^2(A)}{\|A\|^2_F}\right)^k\|X^{(0)}-A^+B\|_F^2,
	\end{equation*}
	where the $i$th row of $A$ is selected with probability $p_{row=i}=\frac{\|A_{i,:}\|_2^2}{\|A\|^2_F}$.
\end{corollary}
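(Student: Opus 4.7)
The plan is to deduce Corollary \ref{c201} directly from Theorem \ref{t201} by identifying the limit matrix $X^*=A^+B+(I_n-A^+A)X^{(0)}$ with $A^+B$ under the hypothesis on $X^{(0)}$. The only thing that needs to be checked is that the correction term $(I_n-A^+A)X^{(0)}$ vanishes, after which the contraction bound in Theorem \ref{t201} transfers verbatim.

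First I would recall the standard fact that $A^+A$ is the orthogonal projector onto $R(A^T)$, and hence $I_n-A^+A$ is the orthogonal projector onto $R(A^T)^\perp=N(A)$. Working column by column, the hypothesis $X^{(0)}_{:,j}\in R(A^T)$ for every $j=1,\ldots,p$ gives $(I_n-A^+A)X^{(0)}_{:,j}=0$ for each $j$, so that $(I_n-A^+A)X^{(0)}=0$ as a matrix identity.

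Next I would substitute this into the expression $X^*=A^+B+(I_n-A^+A)X^{(0)}$ from Theorem \ref{t201} to conclude $X^*=A^+B$. The convergence estimate
\[
E[\|X^{(k)}-X^*\|_F^2]\leq\left(1-\frac{\sigma_{min}^2(A)}{\|A\|_F^2}\right)^k\|X^{(0)}-X^*\|_F^2
\]
from Theorem \ref{t201}, with the probabilities $p_{row=i}=\|A_{i,:}\|_2^2/\|A\|_F^2$, then becomes exactly the claimed inequality with $X^*$ replaced by $A^+B$. Linear convergence in mean square follows immediately.

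There is no real obstacle here; the corollary is a specialization of Theorem \ref{t201} to a distinguished choice of initial data. The only point worth emphasizing in the write-up is why the columnwise range condition is equivalent to $(I_n-A^+A)X^{(0)}=0$, so that the "$(I_n-A^+A)X^{(0)}$" term in the limit drops out. Everything else is a direct quotation of the previous theorem.
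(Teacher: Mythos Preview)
Your proposal is correct and matches the paper's approach: the paper also reduces the corollary to Theorem \ref{t201} by showing $(I_n-A^+A)X^{(0)}=0$, writing $X^{(0)}=A^TY^{(0)}$ and invoking $A^+AA^T=A^T$, which is just a concrete restatement of your projector argument. Nothing further is needed.
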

\begin{proof}
	Due to $X^{(0)}_{:,j}\in R(A^T)$, $j=1,\cdots,p$, we assume that $X^{(0)}=A^TY^{(0)}$. By $A^+AA^T=A^T$, we know that $X^*=A^+B+(I_n-A^+A)X^{(0)}=A^+B+X^{(0)}-A^+AA^TY^{(0)}=A^+B$.
\end{proof}

Considering the following consistent matrix equation
\begin{equation}\label{e211}
	XA=C,
\end{equation}
where $A\in R^{m\times n}$ and $C\in R^{p\times n}$. So there exists an $X^*\in R^{p\times m}$ satisfying $X^*A=C$. In general, the equation (\ref{e211}) has multiple solutions. Now we try to find its minimal $F$-norm solution $CA^+$ by Kaczmarz method.

Equation (\ref{e211}) is equivalent to the matrix equation
\begin{align}\label{212}
	A^TY=C^T,
\end{align}
where $X=Y^T$. Based on the iteration (\ref{e203}), we get the RK method for solving the consistent matrix equation $XA=C$ denoted as the RKCXA method.
\begin{equation*}\label{e213}
	X^{(k+1)}=X^{(k)}+\frac{C_{:,j}-X^{(k)}A_{:,j}}{M(j)}(A_{:,j})^T, \  \ k\ge 0,
\end{equation*}
where $j=(k \ mod \ n)+1$. This is a column projection method and the cost of each iteration of the method is $4mp+m$.

\begin{theorem}\label{t202}
	The sequence $\{X^{(k)}\}$ generated by the RKCXA method starting from the initial matrix $X^{(0)}\in R^{p\times m}$ converges linearly to $X^*=CA^++X^{(0)}(I_n-AA^+)$ in mean square and
	\begin{equation*}\label{e214}
		E[\|X^{(k)}-X^*\|_F^2]\leq\left(1-\frac{\sigma_{min}^2(A)}{\|A\|^2_F}\right)^k\|X^{(0)}-X^*\|_F^2,
	\end{equation*}
	where the $j$th column of $A$ is selected with probability $p_{col=j}=\frac{\|A_{:,j}\|_2^2}{\|A\|^2_F}$.
\end{theorem}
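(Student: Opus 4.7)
The plan is to reduce Theorem \ref{t202} directly to the already-proved Theorem \ref{t201} via the equivalence $XA=C \iff A^T Y=C^T$ with $Y=X^T$ noted in (\ref{212}), so no fresh analysis is needed. First, I would verify that transposing the RKCXA update produces exactly the RKCAX update for the system $A^T Y = C^T$: setting $Y^{(k)}:=(X^{(k)})^T$ and using $(A^T)_{j,:} = (A_{:,j})^T$ together with $M(j)=\|A_{:,j}\|_2^2=\|(A^T)_{j,:}\|_2^2$, the recursion
\begin{equation*}
Y^{(k+1)} = Y^{(k)} + \frac{(A^T)_{j,:}^{\,T}}{\|(A^T)_{j,:}\|_2^2}\bigl(C^T_{j,:}-(A^T)_{j,:}Y^{(k)}\bigr)
\end{equation*}
falls out by direct transposition. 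Moreover, the column-selection probability $p_{\mathrm{col}=j}=\|A_{:,j}\|_2^2/\|A\|_F^2$ coincides with the row-probability $\|(A^T)_{j,:}\|_2^2/\|A^T\|_F^2$ demanded by Theorem \ref{t201} applied to $A^T$.

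Next, I would invoke Theorem \ref{t201} with the substitutions $A \leftarrow A^T$, $B \leftarrow C^T$, $X^{(0)}\leftarrow (X^{(0)})^T$. Since $\sigma_{\min}(A^T)=\sigma_{\min}(A)$ and $\|A^T\|_F=\|A\|_F$, this yields the mean-square linear convergence
\begin{equation*}
E\bigl[\|Y^{(k)}-Y^*\|_F^2\bigr]\leq\left(1-\frac{\sigma_{\min}^2(A)}{\|A\|_F^2}\right)^k\|Y^{(0)}-Y^*\|_F^2,
\end{equation*}
with limit $Y^*=(A^T)^+ C^T + (I-(A^T)^+ A^T)(X^{(0)})^T$.

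Finally I would transpose back. Using the standard Moore--Penrose identities $(A^T)^+ = (A^+)^T$, $(A^T)^+ A^T = (AA^+)^T = AA^+$ (which is symmetric), transposing $Y^*$ gives
\begin{equation*}
X^* = (Y^*)^T = CA^+ + X^{(0)}\bigl(I-AA^+\bigr),
\end{equation*}
matching the claimed limit once the identity-matrix size is fixed by dimension compatibility with $X^{(0)}\in R^{p\times m}$. The Frobenius-norm bound transfers verbatim because $\|M\|_F=\|M^T\|_F$.

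The only real bookkeeping issue, and hence the ``hard'' step in an otherwise mechanical reduction, is checking that the transposed iteration really is the RKCAX iteration applied to $A^T Y=C^T$ with the correct matching of probabilities and norms; once that is in place, the convergence statement and the identification of $X^*$ follow from Theorem \ref{t201} and the MP-inverse identities above.
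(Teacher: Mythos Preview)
Your reduction is correct and is exactly the approach the paper intends: the paper itself derives the RKCXA iteration by transposing the system to $A^T Y=C^T$ (equation (\ref{212})) and then states for the proof only ``Similar to the proof of Theorem \ref{t201}.'' Your write-up makes this reduction explicit, and your side remark that the identity in $X^*$ must be $I_m$ (not $I_n$) for the dimensions to match is a valid observation about a typo in the statement.
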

\begin{proof}
	Similar to the proof of Theorem \ref{t201}.
\end{proof}

\begin{corollary}\label{c202}
	The sequence $\{X^{(k)}\}$ generated by the RKCXA method. starting from the initial matrix $X^{(0)}$, converges linearly to $CA^+$ in mean square if $(X^{(0)}_{i,:})^T\in R(A),\ i=1,\cdots,p$, and the following relationship holds
	\begin{equation*}\label{215}
		E[\|X^{(k)}-CA^+\|_F^2]\leq\left(1-\frac{\sigma_{min}^2(A)}{\|A\|^2_F}\right)^k\|X^{(0)}-CA^+\|_F^2,
	\end{equation*}
	where the $j$th column of $A$ is selected with probability $p_{col=j}=\frac{\|A_{:,j}\|_2^2}{\|A\|^2_F}$.
\end{corollary}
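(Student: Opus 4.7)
The plan is to mirror the argument used in Corollary \ref{c201}, only transposed to the row side. By Theorem \ref{t202}, we already know that the RKCXA iterates converge in mean square to
\[
X^* = CA^+ + X^{(0)}(I_n - AA^+),
\]
so the entire task reduces to showing that the hypothesis $(X^{(0)}_{i,:})^T \in R(A)$ for $i=1,\dots,p$ forces the second summand to vanish.

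First I would translate the hypothesis into a matrix factorization: $(X^{(0)}_{i,:})^T \in R(A)$ means that for each $i$ there is a vector $y_i$ with $(X^{(0)}_{i,:})^T = A y_i$, equivalently $X^{(0)}_{i,:} = y_i^T A^T$. Stacking the $y_i^T$ as rows of a matrix $Y^{(0)} \in R^{p\times m}$ yields $X^{(0)} = Y^{(0)} A^T$.

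Next I would use the Moore-Penrose identity. Because $AA^+$ is symmetric,
\[
A^T A A^+ = (A A^+)^T A^T = (AA^+ A)^T = A^T,
\]
so that $A^T(I_n - AA^+) = 0$. Substituting the factorization from the previous step gives
\[
X^{(0)}(I_n - AA^+) = Y^{(0)} A^T (I_n - AA^+) = 0,
\]
and therefore $X^* = CA^+$. Plugging this back into the error bound of Theorem \ref{t202} yields the claimed inequality with $X^*$ replaced by $CA^+$.

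I do not expect any real obstacle: the only subtlety is choosing the correct MP identity ($A^T A A^+ = A^T$, rather than $A^+ A A^T = A^T$ as was used in Corollary \ref{c201}), which is forced by the fact that the initial matrix multiplies $A$ from the left in the RKCXA setting while it multiplied $A$ from the right in the RKCAX setting. Everything else is a direct transposition of the earlier proof.
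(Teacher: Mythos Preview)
Your proposal is correct and follows exactly the approach the paper intends: the paper gives no separate proof for Corollary~\ref{c202}, leaving it as the transposed analogue of Corollary~\ref{c201}, and your argument is precisely that transposition, using Theorem~\ref{t202} together with $A^T A A^+ = A^T$ to kill the term $X^{(0)}(I_m - AA^+)$. One trivial slip: since $y_i \in R^n$, the matrix $Y^{(0)}$ should lie in $R^{p\times n}$, not $R^{p\times m}$; this does not affect the argument.
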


{\bf Computing Right Inverse and Left Inverse of Matrix by Kaczmarz Method}

If the matrix $A\in R^{m\times n}$ is full row rank ($m\leq n$), then the right inverse $X$ of $A$ meets
\begin{align}\label{e216}
	AX=I_m.
\end{align}
The MP inverse $A^+=A^T(AA^T)^{-1}$ is one of the right inverse of $A$ and it is also the unique minimal $F$-norm solution of the matrix equation (\ref{e216}). Based on Algorithm \ref{alg21} ($B=I_m$), we can find all the right inverse of $A$ (full row rank) by Kaczmarz method.

\begin{corollary}\label{c203}
	The sequence $\{X^{(k)}\}$ generated by Algorithm \ref{alg21} ($B=I_m$) starting from any initial matrix $X^{(0)}\in R^{n\times m}$, converges linearly to $X^*=A^++(I_n-A^+A)X^{(0)}$ in mean square if $A$ is full row rank, and
	\begin{equation*}\label{e217}
		E[\|X^{(k)}-X^*\|_F^2]\leq\left(1-\frac{\sigma_{min}^2(A)}{\|A\|^2_F}\right)^k\|X^{(0)}-X^*\|_F^2,
	\end{equation*}
	where the $i$th row of $A$ is selected with probability $p_{row=i}=\frac{\|A_{i,:}\|_2^2}{\|A\|^2_F}$.
\end{corollary}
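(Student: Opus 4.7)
The plan is to derive this corollary as an immediate specialization of Theorem \ref{t201} applied with the choice $B = I_m$, so the proof reduces to checking that the hypotheses of that theorem are met and reading off the resulting limit and rate.

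First I would verify consistency of the equation $AX = I_m$. Since $A \in R^{m\times n}$ has full row rank, $AA^+ = I_m$, so $X = A^+$ is a solution and the system is consistent. This is precisely the setting in which Theorem \ref{t201} is valid, and Algorithm \ref{alg21} run with $B = I_m$ generates exactly the iteration (\ref{e203}) analyzed there.

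Next I would invoke Theorem \ref{t201} directly. It gives convergence in mean square of $\{X^{(k)}\}$ to $A^+ B + (I_n - A^+ A) X^{(0)}$, which with $B = I_m$ simplifies to $A^+ + (I_n - A^+ A) X^{(0)}$, matching the claimed $X^*$. The rate bound (\ref{e205}) carries over unchanged, since the probability distribution $p_{row=i} = \|A_{i,:}\|_2^2 / \|A\|_F^2$ and the contraction factor $1 - \sigma_{\min}^2(A)/\|A\|_F^2$ depend on $A$ alone and are insensitive to the right-hand side.

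The only subtle point worth flagging is that the full row rank hypothesis on $A$ is used solely to guarantee consistency of $AX = I_m$; Theorem \ref{t201} itself is stated for arbitrary $A \in R^{m\times n}$, so once consistency is in hand there is no additional obstacle and no further estimate to establish. In short, the corollary is a one-line consequence of the preceding theorem, and I expect the writeup to consist of the consistency check followed by a citation of (\ref{e205}).
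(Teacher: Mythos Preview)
Your proposal is correct and matches the paper's treatment: the paper offers no explicit proof of this corollary, presenting it as an immediate specialization of Theorem~\ref{t201} with $B=I_m$ once the full row rank assumption guarantees consistency via $AA^+=I_m$. Your write-up is in fact more detailed than what the paper provides.
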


\begin{remark}\label{r201}
	If the matrix $A\in R^{m\times n}$ is full row rank ($m\leq n$), then the general right inverse of $A$ is
	\begin{equation*}\label{218}
		X^*=A^++(I_n-A^+A)Z
	\end{equation*}
	for arbitrary $Z\in R^{n\times m}$. See \cite{BG13}.
\end{remark}

If the matrix $A\in R^{m\times n}$ is full column rank ($m\geq n$), then the left inverse $X$ of $A$ meets
\begin{align}\label{e219}
	XA=I_n.
\end{align}
The MP inverse $A^+=(A^TA)^{-1}A^T$ is one of the left inverse of $A$ and it is also the unique minimal $F$-norm solution of the matrix equation (\ref{e219}). Based on Algorithm \ref{alg22} ($C=I_n$), we can find all the left inverse of $A$ (full column rank) by Kaczmarz method.

\begin{corollary}\label{c204}
	The sequence $\{X^{(k)}\}$ generated by the RKCXA method ($C=I_n$) starting from the initial matrix $X^{(0)}\in R^{n\times m}$, converges linearly to $X^*=A^++X^{(0)}(I_m-AA^+)$ in mean square if $A$ is full column rank, and
	\begin{equation*}\label{220}
		E[\|X^{(k)}-X^*\|_F^2]\leq\left(1-\frac{\sigma_{min}^2(A)}{\|A\|^2_F}\right)^k\|X^{(0)}-X^*\|_F^2,
	\end{equation*}
	where the $j$th column of $A$ is selected with probability $p_{col=j}=\frac{\|A_{:,j}\|_2^2}{\|A\|^2_F}$.
\end{corollary}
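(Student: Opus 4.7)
The plan is to invoke Theorem~\ref{t202} directly with the choice $C=I_n$, so the whole task reduces to checking that (\ref{e219}) is consistent under the full column rank hypothesis and then specializing the limit formula and the contraction rate.

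First I would verify consistency: since $A\in R^{m\times n}$ has full column rank, the identity $A^+A=I_n$ holds, so $X=A^+\in R^{n\times m}$ satisfies $A^+A=I_n$ and is therefore a solution of $XA=I_n$. Hence the hypotheses of Theorem~\ref{t202} are met for the right-hand side $C=I_n\in R^{n\times n}$. Next I would specialize the conclusion of that theorem: substituting $C=I_n$ gives $CA^+=I_nA^+=A^+$, so the general limit becomes $X^*=A^++X^{(0)}(I_m-AA^+)$, matching the stated formula. The expected-error bound with contraction factor $1-\sigma_{\min}^2(A)/\|A\|_F^2$ and the column-selection probabilities $p_{col=j}=\|A_{:,j}\|_2^2/\|A\|_F^2$ are inherited verbatim from Theorem~\ref{t202}, with no further estimation required.

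The only point that deserves care is dimension bookkeeping: since $A\in R^{m\times n}$ we have $AA^+\in R^{m\times m}$, and $X^{(0)}\in R^{n\times m}$ must be multiplied on the right by $I_m-AA^+\in R^{m\times m}$, which correctly yields an element of $R^{n\times m}$. I do not anticipate any substantive obstacle; the corollary is essentially a one-line specialization of the general theorem, and the main conceptual content is that the full column rank assumption is exactly what makes $A^+$ itself (rather than some more elaborate expression) the minimal $F$-norm left inverse, so that $CA^+$ collapses to $A^+$ in the limit.
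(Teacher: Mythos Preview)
Your proposal is correct and is exactly the approach implicit in the paper: Corollary~\ref{c204} is stated without proof there, being a direct specialization of Theorem~\ref{t202} with $C=I_n$ once one notes that full column rank gives $A^+A=I_n$ and hence consistency of $XA=I_n$. Your attention to the dimension of $I_m-AA^+$ is warranted, as Theorem~\ref{t202} in the paper actually contains a typo ($I_n$ where $I_m$ is meant), which the corollary silently corrects.
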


\begin{remark}\label{r202}
	If the matrix $A\in R^{m\times n}$ is full column rank ($m\geq n$), then the general left inverse of $A$ is
	\begin{equation*}\label{221}
		X^*=A^++Z(I_m-AA^+)
	\end{equation*}
	for arbitrary $Z\in R^{n\times m}$. See \cite{BG13}.
\end{remark}

\section{Solving the Least Square Solution of Inconsistent Matrix Equation by CD Method}
Consider the matrix equation (which may be inconsistent)
\begin{equation}\label{e301}
	AX=B,
\end{equation}
where $A\in R^{m\times n}$, $B\in R^{m\times p}$. Now we will find its unique minimal $F$-norm least square solution $X^*=A^+B$ with the CD or Gauss-Seidel (GS) method.

If a systems of linear equations $Ax=b$ is inconsistent, the CD method below is a very effective method to solve its least square solution for very large systems of linear equations.
\begin{equation*}\label{e302}
	\alpha_k=\frac{(A_{:,j})^T r^{(k)}}{\|A_{:,j}\|_2^2}, \ x^{(k+1)}_j=x^{(k)}_j+\alpha_k, \  r^{(k+1)}=r^{(k)}-\alpha_k A_{:,j}, \ \ j=(k \ mod \ n)+1,
\end{equation*}
where $x^{(0)}\in R^n$ is arbitrary and $r^{(0)}=b-Ax^{(0)}$. Applying simultaneous $p$ iterative formulae for solving $AX_{:,l}=B_{:,l},\ l=1, \cdots, p$, we get
\begin{equation}\label{e303}
	W^{(k)}=\frac{(A_{:,j})^TR^{(k)}}{\|A_{:,j}\|_2^2}, \ X^{(k+1)}_{j,:}=X^{(k)}_{j,:}+W^{(k)}, \  R^{(k+1)}=R^{(k)}-A_{:,j}W^{(k)}, \ \ j=(k \ mod \ n)+1,
\end{equation}
where $X^{(0)}\in R^{n\times p}$, $R^{(0)}=B-AX^{(0)}$. This is a column projection method and the cost of each iteration of the method is $4mp + p$ if the square of the row norm of $A$ has been calculated in advance. The randomized Gauss-Seidel algorithm for $AX=B$ is described as Algorithm \ref{alg31}.

\begin{algorithm}
	\leftline{\caption{RGS Method for Inonsistent Matrix Equation $AX=B$ (RGSIAX)\label{alg31}}}
	\begin{algorithmic}[1]
		\Require
		$A\in R^{m\times n}$, $B\in R^{m\times p}$, $X^{(0)}\in R^{n\times p}$, $R^{(0)}=B-AX^{(0)}$, $K\in R$
		\State For $j=1:n$, $M(j)=\|A_{:,j}\|_2^2$
		\For {$k=0,1,2,\cdots, K-1$}
		\State Set $p_{col=j}=\frac{\|A_{:,j}\|_2^2}{\|A\|^2_F}$
		\State $W^{(k)}=\frac{(A_{:,j})^TR^{(k)}}{\|A_{:,j}\|_2^2}, \ X^{(k+1)}_{j,:}=X^{(k)}_{j,:}+W^{(k)}, \  R^{(k+1)}=R^{(k)}-A_{:,j}*W^{(k)}$
		\EndFor
		\State Output $X^{(K)}$
	\end{algorithmic}
\end{algorithm}

Obviously, $X^*=A^+B$ is the minimal $F$-norm least square solution of the matrix equation (\ref{e301}) and we have the following conclusions.

\begin{theorem}\label{t301}
	Let $A\in R^{m\times n}$ and $B\in R^{m\times p}$. Let $X^{(k)}$ denote the $k$th iterate by \eqref{e303} with arbitrary $X^{(0)}\in R^{n\times p}$. In exact arithmetic, it holds
	\begin{equation}\label{e304}
		E[\|A(X^{(k)}-A^+B)\|_F^2]\leq\left(1-\frac{\sigma_{min}^2(A)}{\|A\|^2_F}\right)^k\|A(X^{(0)}-A^+B)\|_F^2,
	\end{equation}
	where the $j$th column of $A$ is selected with probability $p_{col=j}=\frac{\|A_{:,j}\|_2^2}{\|A\|^2_F}$.
\end{theorem}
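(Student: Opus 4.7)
The plan is to adapt the Leventhal--Lewis argument for randomized Gauss--Seidel, but track the error through the quantity $AE^{(k)}$ rather than $X^{(k)}-X^\ast$ directly, because the iterate itself need not contract in inconsistent systems. Write $E^{(k)}=X^{(k)}-A^+B$ and decompose the residual as $R^{(k)}=B-AX^{(k)}=(B-AA^+B)-AE^{(k)}$. The first piece lies in $R(A)^\perp$, while the columns of $AE^{(k)}$ lie in $R(A)$; this orthogonality is the key structural fact that will let the inconsistent ``noise term'' $B-AA^+B$ drop out.

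First, I would check that the update of $R^{(k)}$ in \eqref{e303} is consistent with $R^{(k)}=B-AX^{(k)}$ for all $k$ (by induction). Then for any chosen column index $j$, the update changes only the $j$-th row of $X^{(k)}$, so
\begin{equation*}
A E^{(k+1)} \;=\; A E^{(k)} + \frac{A_{:,j}(A_{:,j})^{T} R^{(k)}}{\|A_{:,j}\|_2^2}.
\end{equation*}
Using $(A_{:,j})^{T}(B-AA^+B)=0$ (since $A_{:,j}\in R(A)$ and $B-AA^+B\in R(A)^\perp$), we obtain $(A_{:,j})^{T}R^{(k)}=-(A_{:,j})^{T}AE^{(k)}$, and therefore
\begin{equation*}
AE^{(k+1)} \;=\; \Bigl(I_m - \tfrac{A_{:,j}(A_{:,j})^{T}}{\|A_{:,j}\|_2^2}\Bigr)AE^{(k)},
\end{equation*}
an orthogonal projection of each column of $AE^{(k)}$ away from $A_{:,j}$. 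The Pythagorean identity then gives
\begin{equation*}
\|AE^{(k+1)}\|_F^2 \;=\; \|AE^{(k)}\|_F^2 - \frac{\|(A_{:,j})^{T}AE^{(k)}\|_2^2}{\|A_{:,j}\|_2^2}.
\end{equation*}

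Next I would take the conditional expectation over $j$ with $p_{col=j}=\|A_{:,j}\|_2^2/\|A\|_F^2$. The weights cancel the denominator and leave
\begin{equation*}
E_k\bigl[\|AE^{(k+1)}\|_F^2\bigr] \;=\; \|AE^{(k)}\|_F^2 \;-\; \frac{\|A^{T}AE^{(k)}\|_F^2}{\|A\|_F^2}.
\end{equation*}
To convert this into a contraction, I would apply Lemma~\ref{l201} with $A$ replaced by $A^{T}$ to the matrix $AE^{(k)}$, whose columns lie in $R(A)=R((A^T)^T)$; this yields $\|A^{T}AE^{(k)}\|_F^2\ge \sigma_{\min}^2(A)\,\|AE^{(k)}\|_F^2$. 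Substituting, taking full expectations via the tower property, and iterating over $k$ gives \eqref{e304}.

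The only delicate point is the identity $(A_{:,j})^{T}R^{(k)}=-(A_{:,j})^{T}AE^{(k)}$, which encodes why the coordinate-descent step ``sees'' only the in-range part of the residual; once that is noted, everything reduces to the Pythagorean argument familiar from the consistent case. The range condition needed to invoke Lemma~\ref{l201} is automatic here (it holds for $AE^{(k)}$ regardless of $X^{(0)}$), so no restriction on the initial iterate is required.
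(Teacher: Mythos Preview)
Your proposal is correct and follows essentially the same route as the paper's proof: both derive the identity $\|AE^{(k+1)}\|_F^2=\|AE^{(k)}\|_F^2-\|(A_{:,j})^{T}AE^{(k)}\|_2^2/\|A_{:,j}\|_2^2$, take the expectation over $j$ with the stated probabilities, and finish with the singular-value lower bound on $\|A^{T}AE^{(k)}\|_F^2$. The only difference is presentational---you make the orthogonal decomposition $R^{(k)}=(B-AA^+B)-AE^{(k)}$ and the projection structure explicit, whereas the paper writes the update formula $A(X^{(k+1)}-A^+B)=A(X^{(k)}-A^+B)+A_{:,j}(A_{:,j})^{T}A(A^+B-X^{(k)})/\|A_{:,j}\|_2^2$ directly and expands the squared norm by hand.
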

\begin{proof}
	From \eqref{e303}, we know that
	\begin{equation*}\label{e305}
		A(X^{(k+1)}-A^+B)=A(X^{(k)}-A^+B)+\frac{A_{:,j}(A_{:,j})^TA(A^+B-X^{(k)})}{\|A_{:,j}\|_2^2}.
	\end{equation*}
	Then it holds that		
	\begin{equation*}\label{e306}
		\begin{array}{rl}
			\|A(X^{(k+1)}-A^+B)\|_F^2=&\|A(X^{(k)}-A^+B)+\frac{A_{:,j}(A_{:,j})^TA(A^+B-X^{(k)})}{\|A_{:,j}\|_2^2}\|_F^2\\				
			=&\|A(X^{(k)}-A^+B)\|_F^2+\|\frac{A_{:,j}(A_{:,j})^TA(A^+B-X^{(k)})}{\|A_{:,j}\|_2^2}\|_F^2\\
			&-\frac{2}{\|A_{:,j}\|_2^2}trace((X^{(k)}-A^+B)^{T}A^TA_{:,j}(A_{:,j})^TA(X^{(k)}-A^+B))\\
			=&\|A(X^{(k)}-A^+B)\|_F^2+\frac{\|(A_{:,j})^TA(A^+B-X^{(k)})\|_2^2}{\|A_{:,j}\|_2^2}-2\frac{\|(A_{:,j})^TA(X^{(k)}-A^+B)\|_2^2}{\|A_{:,j}\|_2^2}\\
			=&\|A(X^{(k)}-A^+B)\|_F^2-\frac{\|(A_{:,j})^TA(A^+B-X^{(k)})\|_2^2}{\|A_{:,j}\|_2^2}
		\end{array}
	\end{equation*}
	Therefore,
	\begin{equation}\label{e307}
		\begin{array}{rl}
			E[\|A(X^{(k+1)}-A^+B)\|_F^2]=&\|A(X^{(k)}-A^+B)\|_F^2-E\left[\frac{\|(A_{:,j})^TA(X^{(k)}-A^+B)\|_2^2}{\|A_{:,j}\|_2^2}\right]\\
			=&\|A(X^{(k)}-A^+B)\|_F^2-\frac{1}{\|A\|_F^2}\sum\limits_{i=1}^{n}\|(A_{:,j})^TA(X^{(k)}-A^+B)\|_2^2\\	
			=&\|A(X^{(k)}-A^+B)\|_F^2-\frac{1}{\|A\|_F^2}\|A^{T}A(X^{(k)}-A^+B)\|_2^2\\	
			\leq&\left(1-\frac{\sigma_{min}^2(A)}{\|A\|_F^2}\right)\|A(X^{(k)}-A^+B)\|_F^2.
		\end{array}
	\end{equation}
	By \eqref{e307} and induction on the iteration index $k$, we obtain the estimate \eqref{e304}.
\end{proof}

\begin{remark}\label{r301}
	If $A$ is full column rank, Theorem \ref{t301} implies that $X^{(k)}$ converges linearly in expectation to $A^+B$. Otherwise, the sequence does not necessarily have a limit.
\end{remark}

Similarly, we can also consider the least square solution of the following matrix equation (which may be inconsistent) by the CD method
\begin{equation}\label{e308}
	XA=C,
\end{equation}
where $A\in R^{m\times n}$ and $C\in R^{p\times n}$. Similar to (\ref{e303}), we can get the RGS method for inconsistent matrix equation $XA=C$ denoted by the RGSIXA method.
\begin{equation}\label{e309}
	U^{(k)}=\frac{R^{(k)}(A_{i,:})^T}{\|A_{i,:}\|_2^2}, \ X^{(k+1)}_{:,i}=X^{(k)}_{:,i}+U^{(k)}, \  R^{(k+1)}=R^{(k)}-U^{(k)}A_{i,:},
\end{equation}
where $X^{(0)}\in R^{p\times m}$, $R^{(0)}=C-X^{(k)}A$ and $i=(k \ mod \ m)+1$. This is a row projection method and the cost of each iteration of the method is $4np + p$.

Now we will find its minimal least square solution $X^*= CA^+$ of the matrix equation (\ref{e308}) with the CD (or GS) method.

\begin{theorem}\label{t302}
	Let $A\in R^{m\times n}$ and $C\in R^{p\times n}$. Let $X^{(k)}$ denote the $k$th iterate by \eqref{e309} with arbitrary $X^{(0)}\in R^{p\times m}$. In exact arithmetic, it holds
	\begin{equation*}\label{e310}			
		E[\|(X^{(k)}-CA^+)A\|_F^2]\leq\left(1-\frac{\sigma_{min}^2(A)}{\|A\|^2_F}\right)^k\|(X^{(0)}-CA^+)A\|_F^2,
	\end{equation*}
	where the $i$th row of $A$ is selected with probability $p_{row=i}=\frac{\|A_{i,:}\|_2^2}{\|A\|^2_F}$, and
	\begin{equation*}\label{311}
		X^*=CA^+=\arg\min\limits_{X\in R^{p\times m}}\|XA-C\|_F
	\end{equation*}
\end{theorem}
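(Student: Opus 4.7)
The plan is to mirror the proof of Theorem \ref{t301}, exploiting the symmetry between $AX=B$ with the RGSIAX column-sweep and $XA=C$ with the RGSIXA row-sweep. Concretely, transposing (\ref{e309}) converts the update to an iteration of the form studied in Theorem \ref{t301} applied to the system $A^TY = C^T$; this reduction gives the result immediately, but I will carry out the direct calculation in the original variables because the bookkeeping is clean.

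First I would rewrite the update as $X^{(k+1)} - X^{(k)} = U^{(k)} e_i^T$, so that $(X^{(k+1)}-X^{(k)})A = U^{(k)} A_{i,:}$. The key algebraic step is to replace the residual $R^{(k)} = C - X^{(k)}A$ by the error $(X^{(k)} - CA^+)A$. Writing $C = CA^+A + C(I_n - A^+A)$ gives $R^{(k)} = -(X^{(k)} - CA^+)A + C(I_n - A^+A)$, and since $(I_n - A^+A)A^T = 0$, multiplying on the right by $(A_{i,:})^T = A^T e_i$ kills the second term. Hence
\begin{equation*}
U^{(k)} = -\frac{(X^{(k)}-CA^+)A\,(A_{i,:})^T}{\|A_{i,:}\|_2^2},
\end{equation*}
and I obtain the clean recursion $(X^{(k+1)} - CA^+)A = (X^{(k)} - CA^+)A\,(I_m - P_i)$, where $P_i = (A_{i,:})^T A_{i,:}/\|A_{i,:}\|_2^2$ is a rank-one orthogonal projector in $R^m$.

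Next I would square the Frobenius norm and take the conditional expectation over $i$ with probability $p_{row=i} = \|A_{i,:}\|_2^2/\|A\|_F^2$. Expanding $\|Y^{(k)}(I_m - P_i)\|_F^2 = \|Y^{(k)}\|_F^2 - \|Y^{(k)}(A_{i,:})^T\|_2^2/\|A_{i,:}\|_2^2$ with $Y^{(k)} := (X^{(k)} - CA^+)A$ and summing against the probabilities gives
\begin{equation*}
E_k\bigl[\|Y^{(k+1)}\|_F^2\bigr] = \|Y^{(k)}\|_F^2 - \frac{\|Y^{(k)} A^T\|_F^2}{\|A\|_F^2}.
\end{equation*}
The rows of $Y^{(k)}$ are row combinations of $A$, so their transposes lie in $R(A^T)$; therefore Lemma \ref{l201} applied to $(Y^{(k)})^T$ gives $\|Y^{(k)} A^T\|_F^2 = \|A(Y^{(k)})^T\|_F^2 \ge \sigma_{\min}^2(A)\|Y^{(k)}\|_F^2$. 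Substituting and iterating the total expectation produces the stated bound.

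Finally, to justify $X^* = CA^+ = \arg\min_X \|XA-C\|_F$, I would decompose $\|XA - C\|_F^2 = \|(X-CA^+)A\|_F^2 + \|CA^+A - C\|_F^2$; the cross term vanishes because $(CA^+A - C)A^T = CA^T - CA^T = 0$. The second summand is independent of $X$, so $CA^+$ is a least-squares solution (with minimum $F$-norm among the affine family $CA^+ + Z(I_m - AA^+)$). The only subtle point, and the one I would be most careful about, is the application of Lemma \ref{l201}: it requires the right object to live in $R(A^T)$, and the proof relies on the rows of $(X^{(k)}-CA^+)A$ being row combinations of $A$, which is automatic from the factor of $A$ on the right rather than from any assumption on $X^{(0)}$.
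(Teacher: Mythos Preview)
Your proof is correct and follows exactly the route the paper indicates (``similar to the proof of Theorem~\ref{t301}''), carrying out the transpose/symmetry between the column-sweep for $AX=B$ and the row-sweep for $XA=C$. One cosmetic slip: the projector $P_i=(A_{i,:})^TA_{i,:}/\|A_{i,:}\|_2^2$ is an $n\times n$ matrix acting on the right of $Y^{(k)}\in R^{p\times n}$, not an operator on $R^m$; your computations already treat it correctly, only the phrase ``in $R^m$'' should read ``in $R^n$''.
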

The proof is similar to that of Theorem \ref{t301}.

\begin{remark}
	If $A$ is full row rank, Theorem \ref{t302} implies that $X^{(k)}$ converges linearly in expectation to $CA^+$. Otherwise, the sequence does not necessarily have a limit.
\end{remark}

{\bf Computing Right Inverse and Left Inverse of Matrix by RGS Method}

If the matrix $A\in R^{m\times n}$ is full column rank ($m\geq n$), then $X^*=(A^TA)^{-1}A^T$ is a left inverse of $A$ and $X^*$ is also the the unique least square solution of the matrix equation
\begin{align*}\label{e312}
	AX=I_m.
\end{align*}

Based on Algorithm \ref{alg31} ($B=I_m$), we can solve $X^*$ ($A$ is full column rank) by RGS method.

\begin{corollary}\label{c301}
	Let $A$ be a full row rank. The sequence $\{X^{(k)}\}$ generated by Algorithm \ref{alg31} starting from arbitrary initial matrix $X^{(0)}\in R^{n\times m}$, converges linearly to $X^*$ in mean square ($B=I_m$), and it holds
	\begin{equation}\label{e313}
		E[\|X^{(k)}-X^*\|_F^2]\leq C\left(1-\frac{\sigma_{min}^2(A)}{\|A\|^2_F}\right)^k\|X^{(0)}-X^*\|_F^2,
	\end{equation}
	where $C\le\frac{\sigma^2_{max}(A)}{\sigma^2_{min}(A)}$ is a constant and the $j$th column of $A$ is selected with probability $p_j=\frac{\|A_{:,j}\|_2^2}{\|A\|^2_F}$.
\end{corollary}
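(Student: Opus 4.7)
The plan is to bootstrap the conclusion of Theorem~\ref{t301} from a bound on $A(X^{(k)}-X^*)$ to a bound on the full error $X^{(k)}-X^*$, exploiting that $A$ has full column rank so that $A^TA$ is positive definite with smallest eigenvalue $\sigma_{min}^2(A)$.

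First I would specialize Theorem~\ref{t301} to $B=I_m$. Under the full column rank hypothesis, $A^+B=(A^TA)^{-1}A^T=X^*$, so the theorem gives
\begin{equation*}
E[\|A(X^{(k)}-X^*)\|_F^2]\leq\left(1-\frac{\sigma_{min}^2(A)}{\|A\|_F^2}\right)^k\|A(X^{(0)}-X^*)\|_F^2.
\end{equation*}

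Next I would establish the two-sided spectral inequality
\begin{equation*}
\sigma_{min}^2(A)\,\|Y\|_F^2\leq\|AY\|_F^2\leq\sigma_{max}^2(A)\,\|Y\|_F^2
\end{equation*}
for every $Y\in R^{n\times m}$; this is immediate column-by-column from the Rayleigh quotient bounds for $A^TA$, the lower bound using that $A^TA$ is nonsingular. Applying the lower bound to the left-hand side of the previous display with $Y=X^{(k)}-X^*$, and the upper bound to the right-hand side with $Y=X^{(0)}-X^*$, then dividing through by $\sigma_{min}^2(A)$, converts the inequality into exactly \eqref{e313} with $C=\sigma_{max}^2(A)/\sigma_{min}^2(A)$.

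I do not anticipate a genuine obstacle. The only subtle point is that, in contrast to Lemma~\ref{l201} where the lower bound $\|AX\|_F^2\geq\sigma_{min}^2(A)\|X\|_F^2$ required $X\in R(A^T)$, here the iterate error $X^{(k)}-X^*$ produced by Algorithm~\ref{alg31} need not lie in $R(A^T)$ for arbitrary $X^{(0)}$, and we must therefore invoke the full column rank hypothesis to obtain the unrestricted bound. That sandwiching step is the one place where the rank hypothesis on $A$ genuinely enters the argument.
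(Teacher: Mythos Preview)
Your proposal is correct and is precisely the argument the paper implicitly intends: the corollary is stated without proof, as an immediate consequence of Theorem~\ref{t301}, and the sandwiching step you describe---applying $\sigma_{\min}^2(A)\|Y\|_F^2\le\|AY\|_F^2\le\sigma_{\max}^2(A)\|Y\|_F^2$ (the lower bound requiring full column rank) to convert the bound on $\|A(X^{(k)}-X^*)\|_F^2$ into one on $\|X^{(k)}-X^*\|_F^2$---is the only natural route. Your observation that the rank hypothesis enters exactly at the unrestricted lower spectral bound is on the mark; note also that the phrase ``full row rank'' in the corollary statement is a typo for ``full column rank'', as the surrounding text makes clear.
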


If the matrix $A\in R^{m\times n}$ is full row rank ($m\leq n$), then $X^*=A^T(AA^T)^{-1}$ is a right inverse of $A$ and also the the unique least square solution of the matrix equation
\begin{align*}\label{e314}
	XA=I_n.
\end{align*}
Based on the RGSIXA method ($C=I_n$), we can solve $X^*$ ($A$ is full row rank) by RGS method.

\begin{corollary}\label{c302}
	Let $A$ be full row rank. The sequence $\{X^{(k)}\}$ generated by the iteration (\ref{e309}) starting from arbitrary initial matrix $X^{(0)}\in R^{n\times m}$, converges linearly to $X^*$ in mean square ($C=I_n$) and it holds
	\begin{equation*}\label{e315}
		E[\|X^{(k)}-X^*\|_F^2]\leq C \left(1-\frac{\sigma_{min}^2(A)}{\|A\|^2_F}\right)^k\|X^{(0)}-X^*\|_F^2,
	\end{equation*}
	where $C\le\frac{\sigma^2_{max}(A)}{\sigma^2_{min}(A)}$ is a constant and the $i$th row of $A$ is selected with probability $p_i=\frac{\|A_{i,:}\|_2^2}{\|A\|^2_F}$.
\end{corollary}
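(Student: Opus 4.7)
The plan is to bootstrap Corollary \ref{c302} directly from Theorem \ref{t302} by using the full row rank hypothesis to sandwich $\|Y\|_F$ between constant multiples of $\|YA\|_F$. Specializing Theorem \ref{t302} with $C=I_n$ gives $X^*=CA^+=A^+=A^T(AA^T)^{-1}$ and the estimate
\[
E[\|(X^{(k)}-X^*)A\|_F^2]\le\left(1-\frac{\sigma_{min}^2(A)}{\|A\|_F^2}\right)^k\|(X^{(0)}-X^*)A\|_F^2,
\]
so the entire task reduces to converting this ``$\cdot\,A$''-type error into the ordinary Frobenius error on $X^{(k)}-X^*$.

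For that conversion I would argue row by row. For any $y\in R^{1\times m}$, one has $\|yA\|_2^2=\|A^Ty^T\|_2^2$, and since $A$ has full row rank the transpose $A^T\in R^{n\times m}$ has full column rank, hence trivial null space, so $\|A^Tz\|_2^2\ge\sigma_{min}^2(A)\|z\|_2^2$ uniformly in $z\in R^m$. Summing over the rows of an arbitrary $Y\in R^{n\times m}$ yields $\|YA\|_F^2\ge\sigma_{min}^2(A)\|Y\|_F^2$, and the same argument with $\sigma_{max}(A^T)=\sigma_{max}(A)$ produces the matching upper bound $\|YA\|_F^2\le\sigma_{max}^2(A)\|Y\|_F^2$.

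Putting the ingredients together is then mechanical: apply the lower bound with $Y=X^{(k)}-X^*$ and take expectations, apply the upper bound with $Y=X^{(0)}-X^*$, and chain both through the Theorem \ref{t302} inequality. This gives
\[
E[\|X^{(k)}-X^*\|_F^2]\le\frac{\sigma_{max}^2(A)}{\sigma_{min}^2(A)}\left(1-\frac{\sigma_{min}^2(A)}{\|A\|_F^2}\right)^k\|X^{(0)}-X^*\|_F^2,
\]
which is the claimed estimate with $C=\sigma_{max}^2(A)/\sigma_{min}^2(A)$. The only point that is conceptually rather than computationally subtle is that the lower bound $\|YA\|_F^2\ge\sigma_{min}^2(A)\|Y\|_F^2$ must hold for \emph{all} $Y$, not merely for $Y$ whose rows lie in $R(A)$. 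This is exactly where the full row rank hypothesis is indispensable, since without it the left null space of $A$ is nontrivial and the column-by-column update in \eqref{e309} does not obviously preserve any subspace constraint on the rows of $X^{(k)}-X^*$ that could control the component missed by $Y\mapsto YA$.
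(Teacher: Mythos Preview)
Your proposal is correct and is precisely the natural derivation the paper has in mind: the paper states Corollary~\ref{c302} (like Corollary~\ref{c301}) without an explicit proof, treating it as an immediate consequence of Theorem~\ref{t302} via the singular-value sandwich $\sigma_{\min}^2(A)\|Y\|_F^2\le\|YA\|_F^2\le\sigma_{\max}^2(A)\|Y\|_F^2$, which is valid for all $Y$ exactly because $A$ has full row rank. Your remark about why full row rank is indispensable here is also on point.
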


\section{Solving Matrix Equation $AX=B$ and $XA=C$ by REK and REGS Methods}
The extended Kaczmarz method \cite{D10} and extended CD method \cite{MN15} are applicable to all kinds of systems of linear equations (consistent or inconsistent, over-determined or under-determined, the coefficient matrix $A$ has full rank or not), we can use these two methods to compute MP inverse of any matrix $A$.

It is well known that if $A\in R^{m\times n}$ is a general matrix then it does not necessarily have matrix $B$ so that $AB=BA=I$. But there exists a canonical generalized inverse, called the Moore-Penrose (MP) inverse and denoted by $A^+$, which is uniquely determined by $A$ that satisfies the following Penrose equation.
$$AA^+A=A, \ \  A^+AA^+=A^+, \ \ (AA^+)^T=AA^+, \ \ (A^+A)^T=A^+A.$$
Further, as Penrose showed in \cite{P56} (see also \cite{W15}, \cite{M90} and \cite{SMW21} ), the MP inverse satisfies the following inequalities: for all $X\in R^{n\times p}$, $B\in R^{m\times p}$
\begin{equation}\label{e401}
	\|AX-B\|_2\geq\|AA^+B-B\|_2
\end{equation}
with equality occurring in (\ref{e401}) if and only if $X=A^+B + (I_n-A^+A)L$
where $L\in R^{n\times p}$ is arbitrary; and
\begin{equation}\label{e402}
	\|A^+B+(I_n-A^+A)L\|_2\geq\|A^+B\|_2
\end{equation}
with equality occurring in (\ref{e402}) if and only if $(I_n-A^+A)L=0$. (The only restrictions on the matrices occurring in (\ref{e401}) and (\ref{e402}) is that they be conformable for multiplication).

\begin{corollary}\label{c401}
	For all $X$,
	\begin{equation}\label{e403}
		\|AX-B\|_F\geq\|AA^+B-B\|_F
	\end{equation}
	with the equality occurring in (\ref{e403}) if and only if $X=A^+B + (I_n-A^+A)L$,
	where $L\in R^{n\times p}$ is arbitrary; and
	\begin{equation}\label{e404}
		\|A^+B+(I_n-A^+A)L\|_F\geq\|A^+B\|_F
	\end{equation}
	with the equality occurring in (\ref{e404}) if and only if $(I_n-A^+A)L=0$.
\end{corollary}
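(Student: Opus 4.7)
The plan is to reduce both Frobenius-norm statements to the already-cited vector two-norm statements \eqref{e401} and \eqref{e402} by working column by column, using the decomposition $\|M\|_F^2=\sum_{j=1}^{p}\|M_{:,j}\|_2^2$. Because $A^+$ and $I_n-A^+A$ act on a block matrix column by column, each Frobenius-norm inequality becomes a sum of independent single-column inequalities, so nothing genuinely new has to be proved.

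For \eqref{e403} I would write
\[
\|AX-B\|_F^2=\sum_{j=1}^{p}\|AX_{:,j}-B_{:,j}\|_2^2
\]
and apply the vector form of \eqref{e401} (the $p=1$ case) to each column, obtaining $\|AX_{:,j}-B_{:,j}\|_2\ge\|AA^+B_{:,j}-B_{:,j}\|_2$ with equality if and only if $X_{:,j}=A^+B_{:,j}+(I_n-A^+A)\ell_j$ for some $\ell_j\in R^{n}$. Summing the squared inequalities over $j$ yields \eqref{e403}. For the equality case, equality must hold in every column simultaneously; assembling the per-column witnesses into $L=[\ell_1,\ldots,\ell_p]\in R^{n\times p}$ then recovers the global form $X=A^+B+(I_n-A^+A)L$ because $A^+$ and $I_n-A^+A$ distribute over columns.

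For \eqref{e404} the cleanest route is not a second column reduction but a direct Pythagorean identity using the orthogonal decomposition $R^{n}=R(A^T)\oplus\ker(A)$. Each column of $A^+B$ lies in $R(A^+)=R(A^T)$, and each column of $(I_n-A^+A)L$ lies in $\ker(A)$ since $I_n-A^+A$ is the orthogonal projector onto $\ker(A)$. Hence the Frobenius inner product decouples as
\[
\langle A^+B,\,(I_n-A^+A)L\rangle=\sum_{j=1}^{p}(A^+B_{:,j})^{T}(I_n-A^+A)L_{:,j}=0,
\]
and Pythagoras gives $\|A^+B+(I_n-A^+A)L\|_F^2=\|A^+B\|_F^2+\|(I_n-A^+A)L\|_F^2\ge\|A^+B\|_F^2$, with equality if and only if $(I_n-A^+A)L=0$.

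No step poses a real obstacle: everything reduces to either the cited vector Penrose inequality or the standard orthogonality $R(A^T)\perp\ker(A)$. The only small piece of bookkeeping is the equality case of \eqref{e403}, where one must check that the per-column vectors $\ell_j$ really do assemble into a single matrix $L$ satisfying $X=A^+B+(I_n-A^+A)L$; this is automatic from the column-wise action of $A^+$ and $I_n-A^+A$.
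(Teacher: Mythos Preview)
Your proof is correct and, for \eqref{e403}, coincides exactly with the paper's argument: the paper also writes $\|AX-B\|_F^2=\sum_{j}\|AX_{:,j}-B_{:,j}\|_2^2$ and applies the vector inequality \eqref{e401} columnwise. For \eqref{e404} the paper simply repeats the same column reduction, invoking \eqref{e402} on each column, whereas you instead use the Frobenius-Pythagoras identity coming from $R(A^T)\perp\ker(A)$ directly. This is a cosmetic difference only---\eqref{e402} is itself the one-column Pythagorean statement---but your version has the slight advantage of delivering the equality characterisation in \eqref{e404} for free, which the paper's written proof does not spell out. Your handling of the equality case in \eqref{e403} (assembling the columnwise witnesses $\ell_j$ into $L$) is also more explicit than the paper's, which states the equality condition but does not argue it.
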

\begin{proof}
	When $p=1$, (\ref{e403}) and (\ref{e404}) coincide with (\ref{e401}) and (\ref{e402}) respectively. When $p>1$, we have
	$$\|AX-B\|_F^2=\sum\limits_{j=1}^p\|AX_{:,j}-B_{:,j}\|_2^2\geq\sum\limits_{i=1}^p\|AA^+B_{:,j}-B_{:,j}\|_2^2=\|AA^+B-B\|_F^2,$$
	and
	$$\|A^+B+(I_n-A^+A)L\|_F^2=\sum\limits_{i=1}^p\|A^+B_{:,j}+(I_n-A^+A)L_{:,j}\|_2^2\geq\sum\limits_{i=1}^p\|A^+B_{:,j}\|_2^2=\|A^+B\|_F^2.$$
\end{proof}

\subsection{Solving Matrix Equation by Extended Kaczmarz Method}
Based Corollary \ref{c401}, we can solve matrix equation $AX=B$ or $XA=C$ for arbitrary $A\in R^{m\times n}$ by extended Kaczmarz method, where $A$ is likely to be rank defective ({\bf but there are no rows and columns that are all zero!}).

For $AX=B$, we use the following algorithm \ref{alg401}.

\begin{algorithm}
	\leftline{\caption{REK Method for Inconsistent Matrix Equation $AX=B$ (REKIAX)\label{alg401}}}
	\begin{algorithmic}[1]
		\Require
		$A\in R^{m\times n}$, $B\in R^{m\times p}$, $X^{(0)}\in R^{n\times p}$, $K\in R$,$Z_0=B$
		\State for $i=1:m$, $M(i)=\|A_{i,:}\|_2^2$
		\State for $j=1:n$, $N(j)=\|A_{:,j}\|_2^2$
		\For {$k=1,2,\cdots, K-1$}
		\State Set $p_{row=i}=\frac{\|A_{i,:}\|_2^2}{\|A \|^2_F}$, $p_{col=j}=\frac{\|A_{:,j}\|_2^2}{\|A \|^2_F}$
		\State Compute $Z^{(k+1)}=Z^{(k)}-\frac{A_{:,j}}{N(j)}((A_{:,j})^TZ^{(k)})$
		\State Compute $X^{(k+1)}=X^{(k)}+\frac{(A_{i,:})^T(B_{i,:}-Z_{i,:}-A_{i,:}X^{(k)})}{M(i)}$
		\EndFor
		\State Output $X^{(K)}$
	\end{algorithmic}
\end{algorithm}

\begin{lemma}\label{l401}
	Let $Z^*=(I_m- AA^+)B$. Denote $\{Z^{(k)}\}$ as the $k$th iterate of RK applied to $A^TZ=0$ with the initial guess $Z^{(0)}\in R^{m\times n}$. If $Z^{(0)}_{:,j}\in B_{:,j}+R(A),\ j=1,\ldots, n$,  then $Z^{(k)}$ converges linearly to $(I- AA^+)B$ in mean square form. Moreover, the solution error in expectation for the iteration sequence $Z^{(k)}$ obeys
	\begin{equation}\label{e405}
		E[\|Z^{(k)}-Z^*\|_F^2]\leq \rho^{k}\|Z^{(0)}-Z^*\|_F^2,\  {\rm with\ } \rho=1-\frac{\sigma^2_{\min}(A)}{\|A\|^2_F},
	\end{equation}
	where the $j$th column of $A$ is selected with probability $\hat{p}_j(A)=\frac{\|A_{:,j}\|_2^2}{\|A\|^2_F}$.
\end{lemma}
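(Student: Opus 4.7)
I plan to derive this lemma directly from Theorem \ref{t201} by substituting $A \leftarrow A^T$ and $B \leftarrow 0$; the update in line 5 of Algorithm \ref{alg401} is nothing but the RK iteration \eqref{e203} for the consistent matrix equation $A^T Z = 0$, because the $j$th row of $A^T$ equals $(A_{:,j})^T$ and has squared norm $N(j)$. So I would first pin down this identification and confirm that the column-selection probability $\hat{p}_j = \|A_{:,j}\|_2^2/\|A\|_F^2$ coincides with the row-selection probability of $A^T$ required by Theorem \ref{t201}.

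Second, I would check that $Z^* = (I_m - AA^+)B$ is indeed a solution of $A^T Z = 0$, which follows instantly from the Penrose identity $A^T A A^+ = A^T$. Invoking Theorem \ref{t201} with the substitutions above then delivers mean-square linear convergence of $\{Z^{(k)}\}$ to
\[
Z^{\sharp} := (I_m - (A^T)^+ A^T)\, Z^{(0)} = (I_m - AA^+)\, Z^{(0)},
\]
where I use $(A^T)^+ A^T = (A A^+)^T = A A^+$. The corresponding rate is $1 - \sigma_{\min}^2(A^T)/\|A^T\|_F^2 = \rho$, matching the advertised value since transposition preserves both the smallest nonzero singular value and the Frobenius norm.

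Finally, I would use the initial-data hypothesis $Z^{(0)}_{:,j} \in B_{:,j} + R(A)$, i.e.\ $Z^{(0)} = B + A W$ for some $W \in R^{n\times p}$, to collapse $Z^{\sharp}$ to $Z^*$: since $(I_m - AA^+)A = A - AA^+A = 0$, the $AW$ contribution is annihilated and $Z^{\sharp} = (I_m - AA^+) B = Z^*$, yielding the stated estimate \eqref{e405}. I do not expect a real obstacle here, as everything reduces to a change of variables in Theorem \ref{t201} together with the two one-line identities above. The only nuance worth flagging is that Theorem \ref{t201}'s limit depends on the initial iterate in general, and the affine condition on the columns of $Z^{(0)}$ is precisely what removes that dependence and pins the limit down to $(I_m - AA^+) B$.
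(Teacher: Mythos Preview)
Your proposal is correct and cleaner than the paper's own argument. The paper does \emph{not} invoke Theorem~\ref{t201}; instead it reproves the contraction from scratch: it expands $\|Z^{(k)}-Z^*\|_F^2$ via the identity $\|a\|^2=\|a-b\|^2+\|b\|^2+2\langle a-b,b\rangle$, shows the cross term $E_k[\langle Z^{(k)}-Z^{(k+1)},Z^{(k+1)}-Z^*\rangle_F]$ vanishes using $(A_{:,j})^T Z^{(k+1)}=0$ and $A^T(I-AA^+)=0$, computes $E_k[\|Z^{(k)}-Z^{(k+1)}\|_F^2]=\|A^T(Z^{(k)}-Z^*)\|_F^2/\|A\|_F^2$, and finishes with Lemma~\ref{l201} after noting $(Z^{(k)}-Z^*)_{:,j}\in R(A)$. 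This is essentially the proof of Theorem~\ref{t201} replayed with $A^T$ in place of $A$, so your black-box reduction is the more economical route: once you observe that the update is exactly RKCAX for $A^TZ=0$, Theorem~\ref{t201} hands you convergence to $(I_m-(A^T)^+A^T)Z^{(0)}$ with rate $\rho$, and the affine hypothesis $Z^{(0)}=B+AW$ collapses this limit to $Z^*$ via $(I_m-AA^+)A=0$. Both arguments ultimately rest on the same Pythagorean structure; yours just avoids rewriting it.
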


\begin{proof}
	It is easy to see that
	\begin{equation}\label{e406}
		\left\| Z^{(k)} -Z^*\right\|_F^2=\|Z^{(k)}-Z^{(k+1)}\|_F^2+\left\|Z^{(k+1)} - Z^*\right\|_F^2+2\langle Z^{(k)}-Z^{(k+1)}, Z^{(k+1)}-Z^*\rangle_F.
	\end{equation}
	It follows from
	\begin{align}\label{e407}
		E_k\left[\langle Z^{(k)}-Z^{(k+1)}, Z^{(k+1)}-Z^*\rangle_F\right] \nonumber
		=&	E_k\left[\left\langle \frac{A_{:,j}(A_{:,j})^TZ^{(k)}}{\|A_{:,j}\|_2^2}, Z^{(k+1)}-Z^*\right\rangle_F \right]\\ \nonumber
		=&	E_k\left[{\rm trace}\left(\frac{{Z^{(k)}}^TA_{:,j}(A_{:,j})^T}{\|A_{:,j}\|_2^2}(Z^{(k+1)}-Z^*)\right)\right]\\  \nonumber
		=&E_k\left[\frac{{Z^{(k)}}^TA_{:,j}(A_{:,j})^T}{\|A_{:,j}\|_2^2}Z^*\right] \ ({\rm by} \ (A_{:,j})^{T}Z^{(k+1)} = 0)\\  \nonumber
		=&{\rm trace}\left(\frac{{Z^{(k)}}^TAA^{T}(I-AA^+B)}{\|A\|_
			F^2}\right) \\
		=&0\ ({\rm by} \ A^{T}AA^+ =A^{T})
	\end{align}
	and
	\begin{align*}
		E_k\left[\|Z^{(k)}-Z^{(k+1)}\|_F^2\right]=& E_k\left[\left\|\frac{A_{:,j}(A_{:,j})^TZ^{(k)}}{\|A_{:,j}\|_2^2}\right\|_F^2\right]\\
		& =E_k\left[  {\rm trace}\left(\left(\frac{A_{:,j}(A_{:,j})^TZ^{(k)}}{\|A_{:,j}\|_2^2} \right)^T \frac{A_{:,j}(A_{:,j})^TZ^{(k)}}{\|A_{:,j}\|_2^2}  \right)\right]\\
		&={\rm trace}\left(\frac{{Z^{(k)}}^{T}AA^TZ^{(k)}}{\|A\|_F^2}\right)	\\	
		&=\left\|\frac{A^T(Z^{(k)}-Z^*)}{\|A\|_F}\right\|_F^2({\rm by} \ A^{T}(I-AA^+B) =0).
	\end{align*}
	Since $(Z^{(k)}-Z^*)_{:,j}\in R(A)$, we have
	\begin{align}\label{e408}
		E_k\left[\|Z^{(k)}-Z^{(k+1)}\|_F^2\right]\ge\frac{\sigma^2_{\min}(A)}{\|A\|_F^2}\left\| Z^{(k)} - Z^*\right\|_F^2
	\end{align}
	
	By taking the conditional expectation and Equations \eqref{e406},\eqref{e407},\eqref{e408}, we have
	\begin{align}\label{e409}
		E_k\left[\left\|Z^{(k+1)} - Z^*\right\|_F^2\right]=\left\| Z^{(k)} - Z^*\right\|_F^2-E_k\left[\|Z^{(k)}-Z^{(k+1)}\|_F^2\right]\nonumber
		\\
		\le \left ( 1- \frac{\sigma^2_{\min}(A)}{\|A\|_F^2}\right) \left\| Z^{(k)} - Z^*\right\|_F^2, \ k\ge 0.
	\end{align}
	Finally, by  (\ref{e409}) and induction on the iteration index $k$, we straightforwardly obtain the estimates (\ref{e405}). This completes the proof.
\end{proof}

\begin{theorem}\label{t401}
	The sequence $\{X^{(k)}\}$ generated by Algorithm \ref{alg401} starting from the initial matrix $X^{(0)}\in R^{n\times p}$, converges linearly to $A^+B$ in mean square if $X^{(0)}_{:,j}\in R(A^T),\ j=1,\cdots,p$, and
	\begin{equation*}\label{e410}
		E[\|X^{(k)}-A^+B\|_F^2]\leq\frac{k\rho^k}{ \|A\|^2_F}\left\|Z ^{(0)}-Z^*\right\|_F^2 + \rho^k \left\|X^{(0)}-A^+B \right\|_F^2,
	\end{equation*}
	where $\rho=1-\frac{\sigma_{min}^2(A)}{\|A\|^2_F}$, the $i$th row and $j$th column of $A$ are selected with probability $p_{row=i}=\frac{\|A_{i,:}\|_2^2}{\|A\|^2_F}$ and $p_{col=j}=\frac{\|A_{:,j}\|_2^2}{\|A\|^2_F}$, respectively.
\end{theorem}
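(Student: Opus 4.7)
The plan is to set $X^* = A^+B$ and $Z^* = (I_m - AA^+)B$, derive a one-step inequality that couples the $X$- and $Z$-errors, and then unwind the resulting recursion. The starting observation is the identity $B - Z^* = AA^+B = AX^*$, so $B_{i,:} - Z^*_{i,:} = A_{i,:}X^*$ row by row; substituting this into line~6 of Algorithm~\ref{alg401} rewrites the update as
\begin{equation*}
X^{(k+1)} - X^* = \left(I_n - \frac{(A_{i,:})^T A_{i,:}}{\|A_{i,:}\|_2^2}\right)(X^{(k)} - X^*) - \frac{(A_{i,:})^T\bigl(Z^{(k+1)}_{i,:} - Z^*_{i,:}\bigr)}{\|A_{i,:}\|_2^2}.
\end{equation*}
Before squaring, I would record the invariant $(X^{(k)} - X^*)_{:,j} \in R(A^T)$ for every $k$ and $j$, which follows from the hypothesis $X^{(0)}_{:,j} \in R(A^T)$, from $(A^+B)_{:,j} \in R(A^T)$, and from the fact that each update adds a scalar multiple of $(A_{i,:})^T$ to each column.

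Next I would take the Frobenius norm squared of the above display. The two summands are $F$-orthogonal because $A_{i,:}$ annihilates the projection factor $I_n - (A_{i,:})^T A_{i,:}/\|A_{i,:}\|_2^2$, so the cross term drops and the sum of squares splits cleanly. Averaging the projection piece over the row index $i$, drawn with probability $\|A_{i,:}\|_2^2/\|A\|_F^2$, reproduces exactly the contraction argument used in Theorem~\ref{t201} (invoking Lemma~\ref{l201} columnwise thanks to the invariant above) and gives a factor $\rho$; averaging the second piece cancels $\|A_{i,:}\|_2^2$ against the probability and leaves $\|Z^{(k+1)} - Z^*\|_F^2/\|A\|_F^2$. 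The result is the one-step recurrence
\begin{equation*}
E_k\bigl[\|X^{(k+1)} - X^*\|_F^2\bigr] \le \rho\,\|X^{(k)} - X^*\|_F^2 + \frac{\|Z^{(k+1)} - Z^*\|_F^2}{\|A\|_F^2}.
\end{equation*}

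To close the argument I would take full expectations, apply Lemma~\ref{l401} (whose hypothesis $Z^{(0)}_{:,j} \in B_{:,j} + R(A)$ is trivial since $Z^{(0)} = B$) to replace $E[\|Z^{(k+1)} - Z^*\|_F^2]$ by $\rho^{k+1}\|Z^{(0)} - Z^*\|_F^2$, and thereby obtain $a_{k+1} \le \rho\,a_k + \rho^{k+1}\|Z^{(0)} - Z^*\|_F^2/\|A\|_F^2$ where $a_k := E[\|X^{(k)} - X^*\|_F^2]$. Dividing through by $\rho^{k+1}$ turns this into $a_{k+1}/\rho^{k+1} \le a_k/\rho^k + \|Z^{(0)} - Z^*\|_F^2/\|A\|_F^2$, and $k$ iterations of this telescope to the claimed $\rho^k \|X^{(0)} - A^+B\|_F^2$ plus a $(k\rho^k/\|A\|_F^2)\|Z^{(0)} - Z^*\|_F^2$ noise term.

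The main obstacle is really two orthogonality checks: verifying that the columns of $X^{(k)} - X^*$ stay in $R(A^T)$ so that Lemma~\ref{l201} may be applied columnwise, and verifying that the cross term in the Frobenius expansion vanishes via $A_{i,:}(I_n - (A_{i,:})^T A_{i,:}/\|A_{i,:}\|_2^2) = 0$. Both are short, and once they are in place the rest is the standard Zouzias--Freris device of dividing by $\rho^{k+1}$ to absorb the geometrically decaying $Z$-noise into the $k\rho^k$ coefficient. A minor bookkeeping point is to confirm that the row index $i$ used in the $X$-step and the column index $j$ used in the $Z$-step are drawn independently at each iteration, so that the conditional expectations factor cleanly.
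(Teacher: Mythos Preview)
Your proposal is correct and follows essentially the same route as the paper: the paper introduces an intermediate iterate $\tilde{X}^{(k+1)}$ (the one-step Kaczmarz update for $AX=AA^+B$) to separate the contraction term and the $Z$-noise term, which is exactly your decomposition written with $B_{i,:}-Z^*_{i,:}=A_{i,:}X^*$ substituted in, and the orthogonality, contraction via Lemma~\ref{l201}, use of Lemma~\ref{l401}, and final telescoping all match. Your remark about factoring the conditional expectations over the independent row and column draws is precisely the $E_k = E_k^j E_k^i$ device used in the paper.
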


\begin{proof}
	Denote ${X^{(k)}}$ as the $k$th iterate of REK method for $AX=B$, and $\tilde{X}^{(k+1)}  $  be   the one-step Kaczmarz update for the matrix equation $AX=AA^+B$ from $X^{(k)}$, i.e.,
	\begin{equation*}
		\tilde{X}^{(k+1)} =X^{(k)} + \frac{(A_{i,:})^T}{\|A_{i,:}\|^2_2} (A_{i,:}A^+B-A_{i,:}X^{(k)}).
	\end{equation*}
	We have
	\begin{align*}
		\tilde{X}^{(k+1)} -A^+B  & =X^{(k)}-A^+B  + \frac{(A_{i,:})^TA_{i,:}}{\|A_{i,:}\|^2_2} (A^+B- X^{(k)})  =  \left(I-\frac{(A_{i,:})^TA_{i,:}}{\|A_{i,:}\|^2_2}\right)(X^{(k)}-A^+B)
	\end{align*}
	and
	\begin{equation*}
		X^{(k+1)} - \tilde{X}^{(k+1)} = \frac{(A_{i,:})^T}{\|A_{i,:}\|^2_2} (B_{i,:}-Z_{i,:}^{(k+1)}-A_{i,:}A^+B).
	\end{equation*}
	It follows from
	\begin{align*}
		& \langle \tilde{X}^{(k+1)} -A^+B , X^{(k+1)} - \tilde{X}^{(k+1)} \rangle_F\\
		& = \left\langle  \left(I-\frac{(A_{i,:})^TA_{i,:}}{\|A_{i,:}\|^2_2}\right)(X^{(k)}-A^+B) , \  \frac{(A_{i,:})^T}{\|A_{i,:}\|^2_2} (B_{i,:}-Z_{i,:}^{(k+1)}-A_{i,:}A^+B) \right\rangle_F \\
		& ={\rm trace}\left((X^{(k)}-A^+B)^T \left(I-\frac{(A_{i,:})^TA_{i,:}}{\|A_{i,:}\|^2_2}\right) \frac{(A_{i,:})^T}{\|A_{i,:}\|^2_2} (B_{i,:}-Z_{i,:}^{(k+1)}-A_{i,:}A^+B) \right)\\
		&=0 \ ( {\rm by \ } \left(I-\frac{(A_{i,:})^TA_{i,:}}{\|A_{i,:}\|^2_2}\right) \frac{(A_{i,:})^T}{\|A_{i,:}\|^2_2}=0),
	\end{align*}
	and
	\begin{align*}
		\left\|X^{(k+1)} - \tilde{X}^{(k+1)}\right\|_F^2 & = \left\|\frac{(A_{i,:})^T}{\|A_{i,:}\|^2_2} (B_{i,:}-Z_{i,:}^{(k+1)}-A_{i,:}A^+B)\right\|_F^2 \\
		& ={\rm trace}\left((B_{i,:}-Z_{i,:}^{(k+1)}-A_{i,:}A^+B)^T \frac{A _{i,:} }{\|A_{i,:}\|^2_2}  \frac{(A_{i,:})^T}{\|A_{i,:}\|^2_2} (B_{i,:}-Z_{i,:}^{(k+1)}-A_{i,:}A^+B) \right)\\
		&=\frac{\left\|B_{i,:}-Z_{i,:}^{(k+1)}-A_{i,:}A^+B \right\|_2^2}{\|A_{i,:}\|^2_2}.
	\end{align*}
	that
	\begin{align}\label{e411}
		\left\|X^{(k+1)} - A^+B\right\|_F^2 &  = \left\|X^{(k+1)} - \tilde{X}^{(k+1)}\right\|_F^2 +\| \tilde{X}^{(k+1)} -A^+B \|_F^2 \notag\\
		& = \frac{\left\|B_{i,:}-Z_{i,:}^{(k+1)}-A_{i,:}A^+B \right\|_2^2}{\|A_{i,:}\|^2_2} +\| \tilde{X}^{(k+1)} -A^+B \|_F^2.
	\end{align}
	By taking the conditional expectation on the both side of (\ref{e411}), we have
	\begin{align*}
		E_{k} \left[\frac{\left\|B_{i,:}-Z_{i,:}^{(k+1)}-A_{i,:}A^+B \right\|_2^2}{\|A_{i,:}\|^2_2} \right]
		& = E_{k}^j E_{k}^i\left[\frac{\left\|B_{i,:}-Z_{i,:}^{(k+1)}-A_{i,:}A^+B \right\|_2^2}{\|A_{i,:}\|^2_2} \right]\\
		& = E_{k}^j\left[ \frac{1}{ \|A\|^2_F}\sum_{i=1}^m \left\|B_{i,:}-Z_{i,:}^{(k+1)}-A_{i,:}A^+B \right\|_2^2 \right]\\
		& =  \frac{1}{ \|A\|^2_F}E_{k}^j\left[ \left\|B -Z ^{(k+1)}-A A^+B \right\|_F^2  \right]\\
		& =  \frac{1}{ \|A\|^2_F}E_{k}\left[ \left\| Z ^{(k+1)}-(I-A A^+)B \right\|_F^2  \right],
	\end{align*}
	then
	\begin{align}\label{e412}
		E \left[\frac{\left\|B_{i,:}-Z_{i,:}^{(k+1)}-A_{i,:}A^+B \right\|_2^2}{\|A_{i,:}\|^2_2} \right]& = \frac{1}{ \|A\|^2_F}E \left[  \left\| Z ^{(k+1)}-Z^* \right\|_F^2   \right] \notag\\
		& \le \frac{\rho^{k+1}}{ \|A\|^2_F}\left\|Z ^{(0)}-Z^*\right\|_F^2 \ ({\rm by\ Lemma}\ \ref{l401}).
	\end{align}
	If $X^{(0)}_{:,j}\in R(A^T)$ and $(A^+B)_{:,j}\in R(A^T)$, $j=1,\ldots, n$, then $(X^{(k)}-A^+B)_{:,j}\in R(A^T),\ j=1,\ldots, n$ by induction. It follows from
	\begin{align*}
		E_{k}[ \|\tilde{X}^{(k+1)} -A^+B \|_F^2 ] & = E_{k}^i \left[\left\|\left(I-\frac{(A_{i,:})^TA_{i,:}}{\|A_{i,:}\|^2_2}\right)(X^{(k)}-A^+B)\right\|_F^2 \right]\\
		& = \sum_{i=1}^m  \frac{\|A_{i,:}\|^2_2}{ \|A\|^2_F}\left\|\left(I-\frac{(A_{i,:})^TA_{i,:}}{\|A_{i,:}\|^2_2}\right)(X^{(k)}-A^+B)\right\|_F^2\\
		& = \left\|X^{(k)}-A^+B \right\|_F^2-\frac{\|A(X^{(k)}-A^+B)\|_F^2}{\|A\|^2_F}  \\
		&\le \left\|X^{(k)}-A^+B \right\|_F^2- \frac{\sigma^2_{\min}(A)}{\|A\|^2_F}\left\|X^{(k)}-A^+B \right\|_F^2 \ ({\rm \ by\  Lemma}\ \ref{l201})\\
		&= \rho \left\|X^{(k)}-A^+B \right\|_F^2.
	\end{align*}
	that
	\begin{align}\label{e413}
		E \left[ \left\|\tilde{X}^{(k+1)} -A^+B \right\|_F^2 \right]\le \rho  E \left[ \left\|X^{(k)}-A^+B \right\|_F^2\right].
	\end{align}
	Combining (\ref{e411}),(\ref{e412}) and (\ref{e413}) yields
	\begin{align*}
		E \left[ \| X^{(k+1)} - A^+B\|_F^2 \right] & = E \left[\frac{\left\|B_{i,:}-Z_{i,:}^{(k+1)}-A_{i,:}A^+B \right\|_2^2}{\|A_{i,:}\|^2_2} \right] + E [ \|\tilde{X}^{(k+1)} -A^+B \|_F^2 ]\\
		&  \le \frac{\rho^{k+1}}{ \|A\|^2_F}\left\|Z ^{(0)}-Z^* \right\|_F^2 + \rho E \left[ \left\|X^{(k)}-A^+C \right\|_F^2 \right] \\
		& \le \frac{2\rho^{k+1}}{ \|A\|^2_F}\left\|Z ^{(0)}-Z^* \right\|_F^2 + \rho^2 E \left[ \left\|X^{(k-1)}-A^+B \right\|_F^2 \right] \\
		& \le \cdots \le \frac{(k+1)\rho^{k+1}}{ \|A\|^2_F}\left\|Z ^{(0)}-Z^* \right\|_F^2 + \rho^{k+1} \left\|X^{(0)}-A^+B \right\|_F^2 .
	\end{align*}
	This completes the proof.
\end{proof}

For the matrix equation $XA=C$, similar to Algorithm \ref{alg401}, we have the following algorithm \ref{alg402}.

\begin{algorithm}
	\leftline{\caption{REK Method for Inconsistent Matrix Equation $XA=C$ (REKIXA)}\label{alg402}}
	\begin{algorithmic}[1]
		\Require
		$A\in R^{m\times n}$, $C\in R^{p\times n}$, $X^{(0)}\in R^{p\times m}$, $K\in R$
		\State for $i=1:m$, $M(i)=\|A_{i,:}\|_2^2$
		\State for $j=1:n$, $N(j)=\|A_{:,j}\|_2^2$
		\For {$k=1,2,\cdots, K-1$}
		\State Set $p_{row=i}=\frac{\|A_{i,:}\|_2^2}{\|A \|^2_F}$, $p_{col=j}=\frac{\|A_{:,j}\|_2^2}{\|A \|^2_F}$, $Z^{(0)}=C^T$
		\State Compute $Z^{(k+1)}=Z^{(k)}-\frac{(A_{i,:})^T}{M(i)}(A_{i,:}Z^{(k)})$
		\State Compute $X^{(k+1)}=X^{(k)}+\frac{C_{:,j}-Z_{j,:}^T-X^{(k)}A_{:,j}}{N(j)}(A_{:,j})^T$
		\EndFor
		\State Output $X^{(K)}$
	\end{algorithmic}
\end{algorithm}

\begin{theorem}\label{t402}
	The sequence $\{X^{(k)}\}$ generated by Algorithm \ref{alg402} starting from the initial matrix $X^{(0)}\in R^{p\times m}$, converges linearly to $CA^+$ in mean square form if all the rows of $(X^{(0)}_{i,:})^T\in R(A),\ i=1,\cdots,m$, and
	\begin{equation*}\label{e414}
		E[\|X^{(k)}-CA^+\|_F^2]\leq\frac{k\rho^k}{\|A\|_F^2}\|Z^{(0)}-C(I_n-A^+A)\|_F^2+\rho^k\|X^{(0)}-CA^+\|_F^2,
	\end{equation*}
	where $\rho=1-\frac{\sigma_{min}^2(A)}{\|A\|^2_F}$, the $i$th row and $j$th column of $A$ are selected with probability $p_{row=i}=\frac{\|A_{i,:}\|_2^2}{\|A\|^2_F}$ and $p_{col=j}=\frac{\|A_{:,j}\|_2^2}{\|A\|^2_F}$, respectively.
\end{theorem}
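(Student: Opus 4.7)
The plan is to mimic the proof of Theorem~\ref{t401} with the roles of rows and columns interchanged, exploiting the equivalence $XA=C \Leftrightarrow A^TX^T=C^T$ (indeed, Algorithm~\ref{alg402} is obtained from Algorithm~\ref{alg401} by this substitution followed by taking transposes). I would organize the argument in three steps corresponding to the three ingredients of Theorem~\ref{t401}: (i) an RK-contraction estimate for the auxiliary sequence $\{Z^{(k)}\}$; (ii) an orthogonal splitting of $X^{(k+1)}-CA^+$ into a ``clean'' Kaczmarz step plus a $Z$-driven correction; and (iii) a recursive unrolling that produces the $\frac{k\rho^k}{\|A\|_F^2}$ term.

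For step (i), the $Z$-update in Algorithm~\ref{alg402} is precisely the randomized Kaczmarz iteration for the homogeneous system $AZ=0$ starting from $Z^{(0)}=C^T$, with row selection probabilities $p_{row=i}$. Its fixed point is the projection of $Z^{(0)}$ onto the null space of $A$, namely $Z^* = (I_n-A^+A)C^T$, the transpose of $C(I_n-A^+A)$; hence the Frobenius-norm expression $\|Z^{(0)}-C(I_n-A^+A)\|_F$ appearing in the statement agrees with $\|Z^{(0)}-Z^*\|_F$. Since $Z^{(0)}-Z^* = A^+AC^T$ has every column in $R(A^T)$ and each RK increment is a scalar multiple of $(A_{i,:})^T\in R(A^T)$, Lemma~\ref{l201} applied column-by-column (the transpose of Lemma~\ref{l401}) delivers $E[\|Z^{(k)}-Z^*\|_F^2]\le\rho^k\|Z^{(0)}-Z^*\|_F^2$.

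For step (ii), following the template of Theorem~\ref{t401}, define the one-step RK update
\[
\tilde{X}^{(k+1)} = X^{(k)}+\frac{CA^+A_{:,j}-X^{(k)}A_{:,j}}{N(j)}(A_{:,j})^T
\]
for the consistent matrix equation $XA=CA^+A$ starting from $X^{(k)}$. Then $X^{(k+1)}-\tilde{X}^{(k+1)} = \frac{C_{:,j}-(Z^{(k+1)})_{j,:}^T-CA^+A_{:,j}}{N(j)}(A_{:,j})^T$, and $\langle \tilde{X}^{(k+1)}-CA^+,\,X^{(k+1)}-\tilde{X}^{(k+1)}\rangle_F = 0$ because $\tilde{X}^{(k+1)}-CA^+$ is annihilated from the right by $A_{:,j}$. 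Pythagoras then yields $\|X^{(k+1)}-CA^+\|_F^2 = \|\tilde{X}^{(k+1)}-CA^+\|_F^2+\|X^{(k+1)}-\tilde{X}^{(k+1)}\|_F^2$. Averaging the correction piece over $j$ with probability $p_{col=j}$ turns it into $\frac{1}{\|A\|_F^2}\|C-(Z^{(k+1)})^T-CA^+A\|_F^2 = \frac{1}{\|A\|_F^2}\|Z^{(k+1)}-Z^*\|_F^2$, which by step~(i) is bounded by $\frac{\rho^{k+1}}{\|A\|_F^2}\|Z^{(0)}-Z^*\|_F^2$. For the clean piece, one maintains inductively the invariant $(X^{(k)}_{i,:}-(CA^+)_{i,:})^T\in R(A)$ for every $i=1,\dots,p$ (true at $k=0$ by the hypothesis and preserved because each update adds only a scalar multiple of $(A_{:,j})^T$ to row $i$, and $A_{:,j}\in R(A)$); applying Lemma~\ref{l201} to the transpose of $X^{(k)}-CA^+$ then gives $E[\|\tilde{X}^{(k+1)}-CA^+\|_F^2]\le\rho\,E[\|X^{(k)}-CA^+\|_F^2]$.

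For step (iii), combining the two bounds produces the one-step recursion $E[\|X^{(k+1)}-CA^+\|_F^2]\le \frac{\rho^{k+1}}{\|A\|_F^2}\|Z^{(0)}-Z^*\|_F^2+\rho\,E[\|X^{(k)}-CA^+\|_F^2]$, which unrolls into the advertised inequality verbatim as in (\ref{e411})--(\ref{e413}). The main (and essentially only) obstacle is the routine but error-prone bookkeeping of transposes, in particular verifying that the correct invariant on the rows of $X^{(k)}-CA^+$ is preserved so that Lemma~\ref{l201} delivers the contraction factor $\rho$ for the clean component. Once that invariant is checked, every remaining manipulation is the transposed analogue of the corresponding step in Theorem~\ref{t401}.
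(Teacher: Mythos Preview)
Your proposal is correct and follows exactly the route the paper intends: the paper itself says only ``The proof is similar to the proof of Theorem~\ref{t401},'' and your three-step outline (RK contraction for the auxiliary $Z$-sequence via the transposed Lemma~\ref{l401}, orthogonal splitting of $X^{(k+1)}-CA^+$ into a clean Kaczmarz step plus a $Z$-driven correction, then recursive unrolling) is precisely the transposed analogue of that proof. Your verification of the row invariant $(X^{(k)}_{i,:}-(CA^+)_{i,:})^T\in R(A)$ and the identification $Z^*=(I_n-A^+A)C^T$ are the only points requiring care, and you handle both correctly (note incidentally that the index range in the theorem's hypothesis should read $i=1,\dots,p$, not $m$, which you tacitly fix).
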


The proof is similar to the proof of Theorem \ref{alg401}.

\begin{remark}\label{r401}
	Especially, let $p=m$ and $B=I_m$, or $p=n$ and $C=I_n$, that is, consider $AX=I_m$, or $XA=I_n$,  we can solve MP inverse of arbitrary $A\in R^{m\times n}$ by extended Kaczmarz method.
	Based on Theorem \ref{t401} ($B=I_m$) and Theorem \ref{t402} ($C=I_n$), we obtain the following result for arbitrary matrix $A\in R^{m\times n}$ (maybe rank defective)
	\begin{equation*}\label{415}
		\lim\limits_{k\rightarrow\infty}X^{(k)}=A^+=\arg\min\left\{\|X\|_F: \ X\in\arg\min\limits_{X\in R^{n\times m}}\|AX-I_m\|_F\right\},
	\end{equation*}
	and
	\begin{equation*}\label{416}
		\lim\limits_{k\rightarrow\infty}X^{(k)}=A^+=\arg\min\left\{\|X\|_F: \ X\in\arg\min\limits_{X\in R^{n\times m}}\|XA-I_n\|_F\right\}.
	\end{equation*}
\end{remark}

\subsection{Solving Matrix Equation by REGS Method}
Similar to Algorithm \ref{alg401} and Algorithm \ref{alg402}, we can solve matrix equation $AX=B$ and $XA=C$ for arbitrary $A\in R^{m\times n}$ by extended RGS method, where $A$ is likely to be rank defective ({\bf but there are no rows and columns that are all zero!}).
\begin{algorithm}
	\leftline{\caption{REGS Method for Inconsistent Matrix Equation $AX=B$ (REGSIAX)}\label{alg403}}
	\begin{algorithmic}[1]
		\Require
		$A\in R^{m\times n}$, $B\in R^{m\times p}$, $Y^{(0)}\in R^{n\times p}$, $X^{(0)}_{:,j}\in R(A^T)$, $j=1,\cdots,p$, $R^{(0)}=B-AY^{(0)}$, $K\in R$
		\State for $i=1:m$, $M(i)=\|A_{i,:}\|_2^2$
		\State for $j=1:n$, $N(j)=\|A_{:,j}\|_2^2$
		\For {$k=0,1,2,\cdots, K-1$}
		\State Set $p_{row=i}=\frac{\|A_{i,:}\|_2^2}{\|A \|^2_F}$, $p_{col=j}=\frac{\|A_{:,j}\|_2^2}{\|A \|^2_F}$
		\State Compute  $W^{(k)}=\frac{(A_{:,j})^T R^{(k)}}{N(j)}$, $Y^{(k+1)}_{j,:}=Y^{(k)}_{j,:}+W^{(k)}$, $R^{(k+1)}=R^{(k)}-A_{:,j}W^{(k)}$
		\State Compute $X^{(k+1)}=X^{(k)}-(A_{i,:})^T\left(\frac{A_{i,:}(X^{(k)}-Y^{(k+1)})}{M(i)}\right)$
		\EndFor
		\State Output $X^{(K)}$
	\end{algorithmic}
\end{algorithm}
\begin{theorem}\label{t403}
	The sequence $\{X^{(k)}\}$ generated by Algorithm \ref{alg403} starting from the initial matrix $X^{(0)}\in R^{n\times p}$, converges linearly to $A^+B$ in mean square if $X^{(0)}_{:,j}\in R(A^T),\ j=1,\cdots,p$, and
	\begin{equation*}\label{e417}
		E[\|X^{(k+1)}-A^+B\|_F^2]\leq\frac{(k+1)\rho^{k+1}}{ \|A\|^2_F}\left\|AY ^{(0)}-AA^+B \right\|_F^2 + \rho^{k+1} \left\|X^{(0)}-A^+B \right\|_F^2,
	\end{equation*}
	where $\rho=1-\frac{\sigma_{min}^2(A)}{\|A\|^2_F}$, the $i$th row and $j$th column of $A$ are selected with probability $p_{row=i}=\frac{\|A_{i,:}\|_2^2}{\|A\|^2_F}$ and $p_{col=j}=\frac{\|A_{:,j}\|_2^2}{\|A\|^2_F}$, respectively.
\end{theorem}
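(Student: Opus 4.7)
The plan is to follow essentially the same scheme that was used for Theorem \ref{t401}, with the only structural change being that the ``noise'' sequence $Z^{(k)}$ from the REK proof is replaced by the companion RGS iterate $Y^{(k)}$ from Algorithm \ref{alg403}. The key observation is that the $X$-update in Algorithm \ref{alg403} is exactly one Kaczmarz step for the (noisy) consistent equation $AX = AY^{(k+1)}$, so we may compare it against the Kaczmarz step for the ``ideal'' consistent equation $AX = AA^+B$.

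First, I would introduce the auxiliary iterate
\begin{equation*}
\tilde{X}^{(k+1)} = X^{(k)} + \frac{(A_{i,:})^T}{\|A_{i,:}\|_2^2}\bigl(A_{i,:}A^+B - A_{i,:}X^{(k)}\bigr),
\end{equation*}
which is one Kaczmarz step for the consistent equation $AX = AA^+B$ starting from $X^{(k)}$. Subtracting this from the actual update in Algorithm \ref{alg403} gives
\begin{equation*}
X^{(k+1)} - \tilde{X}^{(k+1)} = \frac{(A_{i,:})^T}{\|A_{i,:}\|_2^2}\,A_{i,:}\bigl(Y^{(k+1)} - A^+B\bigr).
\end{equation*}
Using the projector identity $\bigl(I - (A_{i,:})^T A_{i,:}/\|A_{i,:}\|_2^2\bigr)(A_{i,:})^T = 0$ (exactly as in the proof of Theorem \ref{t401}), I get the orthogonality $\langle \tilde{X}^{(k+1)} - A^+B,\, X^{(k+1)} - \tilde{X}^{(k+1)}\rangle_F = 0$, and hence the Pythagorean split
\begin{equation*}
\|X^{(k+1)} - A^+B\|_F^2 = \|\tilde{X}^{(k+1)} - A^+B\|_F^2 + \|X^{(k+1)} - \tilde{X}^{(k+1)}\|_F^2.
\end{equation*}

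Next I would handle the two summands separately. For the perturbation term, taking expectation over the row index $i$ (independent of the column index $j$ driving the $Y$-update),
\begin{equation*}
E_i\bigl[\|X^{(k+1)} - \tilde{X}^{(k+1)}\|_F^2\bigr] = \frac{1}{\|A\|_F^2}\sum_{i=1}^m \|A_{i,:}(Y^{(k+1)}-A^+B)\|_2^2 = \frac{\|A(Y^{(k+1)}-A^+B)\|_F^2}{\|A\|_F^2},
\end{equation*}
and then I would invoke Theorem \ref{t301} applied to the RGS iteration producing $Y^{(k+1)}$ (noting $AY^{(0)} - AA^+B = A(Y^{(0)}-A^+B)$) to bound this by $\rho^{k+1}\|AY^{(0)} - AA^+B\|_F^2/\|A\|_F^2$. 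For the ``ideal'' term, I would first verify by induction that $(X^{(k)} - A^+B)_{:,j}\in R(A^T)$ for every $j$ (the initial assumption $X^{(0)}_{:,j}\in R(A^T)$ is preserved because each update adds a multiple of $(A_{i,:})^T$, and columns of $A^+B$ lie in $R(A^T)$); this lets me apply Lemma \ref{l201} columnwise, mimicking the computation in Theorem \ref{t401}, to obtain $E[\|\tilde{X}^{(k+1)}-A^+B\|_F^2] \leq \rho\, E[\|X^{(k)}-A^+B\|_F^2]$.

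Combining the two pieces yields the recursion $a_{k+1} \leq \rho\,a_k + \rho^{k+1}\|AY^{(0)}-AA^+B\|_F^2/\|A\|_F^2$ for $a_k := E[\|X^{(k)}-A^+B\|_F^2]$, and unrolling it inductively (as at the end of the proof of Theorem \ref{t401}) produces the $(k+1)\rho^{k+1}/\|A\|_F^2$ prefactor in the stated bound. The only real subtlety I anticipate is the bookkeeping in step 3: the sampling index $j$ used in the $Y$-update at iteration $k$ is, in principle, independent of the index $i$ used in the $X$-update at the same iteration, so I need to be careful that conditional expectations factor properly and that the bound from Theorem \ref{t301} on $E[\|A(Y^{(k+1)}-A^+B)\|_F^2]$ is applied at the correct level of conditioning (on $j_0,\dots,j_k$ but before $i_k$).
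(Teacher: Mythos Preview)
Your proposal is correct and follows essentially the same approach as the paper's own proof: the paper also splits $X^{(k+1)}-A^+B$ into the orthogonal pieces $\bigl(I-\tfrac{(A_{i,:})^TA_{i,:}}{\|A_{i,:}\|_2^2}\bigr)(X^{(k)}-A^+B)$ and $\tfrac{(A_{i,:})^TA_{i,:}}{\|A_{i,:}\|_2^2}(Y^{(k+1)}-A^+B)$, bounds the first via Lemma~\ref{l201} and the second via Theorem~\ref{t301}, and then unrolls the resulting recursion. Your use of the auxiliary iterate $\tilde X^{(k+1)}$ (mirroring the proof of Theorem~\ref{t401}) is just a notational repackaging of the same decomposition, and your remark about factoring the conditional expectations $E_k = E_k^j E_k^i$ is exactly how the paper handles the bookkeeping.
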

\begin{proof}
	Let ${Y^{(k+1)}}$ denote the $(k+1)$th iterate of RGS method for $AX=B$, and $X^{(k+1)} $ be the one-step Kaczmarz update for the matrix equation $AX=AY^{(k+1)}$ from $Y^{(k+1)}$(\cite{DK19}).
	\begin{align*}
		X^{(k+1)} -A^+B =  \left(I-\frac{(A_{i,:})^TA_{i,:}}{\|A_{i,:}\|^2_2}\right)(X^{(k)}-A^+B)+\frac{(A_{i,:})^TA_{i,:}}{\|A_{i,:}\|^2_2}(Y^{(k+1)}-A^+B)
	\end{align*}
	It follows from the orthogonality, namely,
	\begin{align*}
		& \left\langle  \left(I-\frac{(A_{i,:})^TA_{i,:}}{\|A_{i,:}\|^2_2}\right)(X^{(k)}-A^+B), \frac{(A_{i,:})^TA_{i,:}}{\|A_{i,:}\|^2_2}(Y^{(k+1)}-A^+B) \right\rangle_F\\
		& ={\rm trace}\left((X^{(k)}-A^+B)^T \left(I-\frac{(A_{i,:})^TA_{i,:}}{\|A_{i,:}\|^2_2}\right) \frac{(A_{i,:})^T}{\|A_{i,:}\|^2_2} (Y^{(k+1)}-A^+B) \right)\\
		&=0 \ ( {\rm by \ } \left(I-\frac{(A_{i,:})^TA_{i,:}}{\|A_{i,:}\|^2_2}\right) \frac{(A_{i,:})^T}{\|A_{i,:}\|^2_2}=0).
	\end{align*}
	Then
	\begin{align}\label{e418}
		\left\|X^{(k+1)} - A^+B\right\|_F^2 &  = \left\|\left(I-\frac{(A_{i,:})^TA_{i,:}}{\|A_{i,:}\|^2_2}\right)(X^{(k)}-A^+B)\right\|_F^2 +\left\| \frac{(A_{i,:})^TA_{i,:}}{\|A_{i,:}\|^2_2}(Y^{(k+1)}-A^+B) \right\|_F^2.
	\end{align}
	By taking the conditional expectation on the both side of (\ref{e418}), we have
	\begin{align*}
		& E_{k} \left[\left\| \frac{(A_{i,:})^TA_{i,:}}{\|A_{i,:}\|^2_2}(Y^{(k+1)}-A^+B) \right\|_F^2 \right]\\
		&=	E_{k} \left[{\rm trace}\left((Y^{(k+1)}-A^+B)^T \frac{(A_{i,:})^TA_{i,:} }{\|A_{i,:}\|^2_2}  \frac{(A_{i,:})^TA_{i,:}}{\|A_{i,:}\|^2_2} (Y^{(k+1)}-A^+B) \right) \right]\\
		&=E_{k}^j E_{k}^i \left[{\rm trace}\left((Y^{(k+1)}-A^+B)^T \frac{(A_{i,:})^TA_{i,:}}{\|A_{i,:}\|^2_2} (Y^{(k+1)}-A^+B) \right) \right]\\
		& = E_{k}^j\left[ \frac{1}{ \|A\|^2_F}\sum_{i=1}^m \left\|A_{i,:}(Y^{(k+1)}-A^+B) \right\|_2^2 \right]\\
		& =  \frac{1}{ \|A\|^2_F}E_{k}^j\left[ \left\|A(Y^{(k+1)}-A^+B) \right\|_F^2  \right]\\
		& =  \frac{1}{ \|A\|^2_F}E_{k}\left[ \left\| A(Y^{(k+1)}-A^+B) \right\|_F^2  \right],
	\end{align*}
	then
	\begin{align}\label{e419}
		E_{k} \left[\left\| \frac{(A_{i,:})^TA_{i,:}}{\|A_{i,:}\|^2_2}(Y^{(k+1)}-A^+B) \right\|_F^2 \right]& = \frac{1}{ \|A\|^2_F}E_{k}\left[ \left\| A(Y^{(k+1)}-A^+B) \right\|_F^2  \right] \notag\\
		& \le \frac{\rho^{k+1}}{ \|A\|^2_F}\left\|A(Y^{(0)}-A^+B)\right\|_F^2 \ ({\rm \ by\  Theorem}\ \ref{t301})
	\end{align}
	If $X^{(0)}_{:,j}\in R(A^T)$ and $(A^+B)_{:,j}\in R(A^T)$, $j=1,\ldots, n$, then $(X^{(k)}-A^+B)_{:,j}\in R(A^T),\ j=1,\ldots, n$ by induction. It follows from
	\begin{align}\label{e420}
		E_{k}\left[\left\|\left(I-\frac{(A_{i,:})^TA_{i,:}}{\|A_{i,:}\|^2_2}\right)(X^{(k)}-A^+B)\right\|_F^2 \right] & = E_{k}^i \left[\left\|\left(I-\frac{(A_{i,:})^TA_{i,:}}{\|A_{i,:}\|^2_2}\right)(X^{(k)}-A^+B)\right\|_F^2 \right] \notag\\
		& = \sum_{i=1}^m  \frac{\|A_{i,:}\|^2_2}{ \|A\|^2_F}\left\|\left(I-\frac{(A_{i,:})^TA_{i,:}}{\|A_{i,:}\|^2_2}\right)(X^{(k)}-A^+B)\right\|_F^2 \notag\\
		& = \left\|X^{(k)}-A^+B \right\|_F^2-\frac{\|A(X^{(k)}-A^+B)\|_F^2}{\|A\|^2_F} \notag \\
		&\le \left\|X^{(k)}-A^+B \right\|_F^2- \frac{\sigma^2_{\min}(A)}{\|A\|^2_F}\left\|X^{(k)}-A^+B \right\|_F^2 \notag\\
		&= \rho \left\|X^{(k)}-A^+B \right\|_F^2,
	\end{align}
	where the inequality is based on Lemma \ref{l201}.	
	Combining (\ref{e418}),(\ref{e419}) and (\ref{e420}) it yields
	\begin{align*}
		&E \left[ \| X^{(k+1)} - A^+B\|_F^2 \right]\\
		& = E \left[\left\|\left(I-\frac{(A_{i,:})^TA_{i,:}}{\|A_{i,:}\|^2_2}\right)(X^{(k)}-A^+B)\right\|_F^2 \right] + E\left[ \left\| \frac{(A_{i,:})^TA_{i,:}}{\|A_{i,:}\|^2_2}(Y^{(k+1)}-A^+B) \right\|_F^2\right]\\
		&  \le \frac{\rho^{k+1}}{ \|A\|^2_F}\left\|A(Y^{(0)}-A^+B) \right\|_F^2 + \rho E \left[ \left\|X^{(k)}-A^+C \right\|_F^2 \right] \\
		& \le \frac{2\rho^{k+1}}{ \|A\|^2_F}\left\|A(Y^{(0)}-A^+B)\right\|_F^2 + \rho^2 E \left[ \left\|X^{(k-1)}-A^+B \right\|_F^2 \right] \\
		& \le \cdots \le \frac{(k+1)\rho^{k+1}}{ \|A\|^2_F}\left\|A(Y^{(0)}-A^+B) \right\|_F^2 + \rho^{k+1} \left\|X^{(0)}-A^+B \right\|_F^2 .
	\end{align*}
	This completes the proof.
\end{proof}

For the matrix equation $XA=C$, similar to Algorithm \ref{alg403}, we have the following algorithm \ref{alg404}.

\begin{theorem}\label{t404}
	The sequence $\{X^{(k)}\}$ generated by Algorithm \ref{alg404} starting from the initial matrix $X^{(0)}\in R^{p\times m}$, converges linearly to $CA^+$ in mean square if $(X^{(0)}_{i,:})^T\in R(A),\ j=1,\cdots,m$, and
	\begin{equation*}\label{e421}
		E[\|X^{(k+1)}-CA^+\|_F^2]\leq\frac{(k+1)\rho^{k+1}}{ \|A\|^2_F}\left\|Y ^{(0)}A-CA^+A \right\|_F^2 + \rho^{k+1} \left\|X^{(0)}-CA^+\right\|_F^2,
	\end{equation*}
	where $\rho=1-\frac{\sigma_{min}^2(A)}{\|A\|^2_F}$, the $i$th row and $j$th column of $A$ are selected with probability $p_{row=i}=\frac{\|A_{i,:}\|_2^2}{\|A\|^2_F}$ and $p_{col=j}=\frac{\|A_{:,j}\|_2^2}{\|A\|^2_F}$, respectively.
\end{theorem}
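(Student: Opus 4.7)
The plan is to mirror the proof of Theorem \ref{t403}, exchanging the roles of rows and columns of $A$ throughout. By construction in Algorithm \ref{alg404}, the auxiliary iterate $Y^{(k+1)}$ is precisely the $(k+1)$st iterate of the RGS method of Section 3 applied to $XA=C$ (row $i$ of $A$ being sampled), so Theorem \ref{t302} directly gives
\begin{equation*}
E\bigl[\|(Y^{(k+1)}-CA^+)A\|_F^2\bigr]\le\rho^{k+1}\|(Y^{(0)}-CA^+)A\|_F^2.
\end{equation*}
The final Kaczmarz-type step of the algorithm projects $X^{(k)}$ onto $\{X:XA_{:,j}=Y^{(k+1)}A_{:,j}\}$, which mirrors Algorithm \ref{alg403} but with the rank-one projector acting on the right: $X^{(k+1)}=X^{(k)}-(X^{(k)}-Y^{(k+1)})Q_j$ with $Q_j=A_{:,j}A_{:,j}^T/\|A_{:,j}\|_2^2$.

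Subtracting $CA^+$ from both sides and inserting $Q_j+(I_m-Q_j)=I_m$, I would write
\begin{equation*}
X^{(k+1)}-CA^+=(X^{(k)}-CA^+)(I_m-Q_j)+(Y^{(k+1)}-CA^+)Q_j.
\end{equation*}
Since $Q_j$ and $I_m-Q_j$ are complementary orthogonal projectors, the two summands are $F$-orthogonal (because $Q_j(I_m-Q_j)=0$), yielding the Pythagorean identity $\|X^{(k+1)}-CA^+\|_F^2=\|(X^{(k)}-CA^+)(I_m-Q_j)\|_F^2+\|(Y^{(k+1)}-CA^+)Q_j\|_F^2$. Taking the conditional expectation in the column index $j$, the first summand becomes $\|X^{(k)}-CA^+\|_F^2-\|(X^{(k)}-CA^+)A\|_F^2/\|A\|_F^2$. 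A short induction using the hypothesis $(X^{(0)}_{i,:})^T\in R(A)$ and both update rules shows that $((X^{(k)}-CA^+)_{i,:})^T\in R(A)$ for every $k$ and $i$, exploiting that $(CA^+)^T=(A^T)^+C^T$ has columns in $R(A)$ and that $Q_j$ has range $\mathrm{span}(A_{:,j})\subset R(A)$. Applying Lemma \ref{l201} to $A^T$ on each row then yields $\|(X^{(k)}-CA^+)A\|_F^2\ge\sigma_{\min}^2(A)\|X^{(k)}-CA^+\|_F^2$, so this piece is at most $\rho\|X^{(k)}-CA^+\|_F^2$. For the second summand, $E_k^j[\|(Y^{(k+1)}-CA^+)Q_j\|_F^2]=\|(Y^{(k+1)}-CA^+)A\|_F^2/\|A\|_F^2$, which after total expectation is bounded by $\rho^{k+1}\|(Y^{(0)}-CA^+)A\|_F^2/\|A\|_F^2$ via Theorem \ref{t302}.

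Putting these two bounds together produces the one-step recursion
\begin{equation*}
E[\|X^{(k+1)}-CA^+\|_F^2]\le\rho\,E[\|X^{(k)}-CA^+\|_F^2]+\frac{\rho^{k+1}}{\|A\|_F^2}\|(Y^{(0)}-CA^+)A\|_F^2,
\end{equation*}
and unrolling $k+1$ times produces the linear-in-$k$ prefactor $(k+1)\rho^{k+1}$ stated in the theorem, while the pure-geometric term $\rho^{k+1}\|X^{(0)}-CA^+\|_F^2$ carries the initial error. The main obstacle I anticipate is the bookkeeping of the two independent randomizations (the row index internal to the RGS sub-iteration that produces $Y^{(k+1)}$ versus the column index used by the concluding Kaczmarz step) and the invariance check $((X^{(k)}-CA^+)_{i,:})^T\in R(A)$, which is precisely what licenses the transposed application of Lemma \ref{l201} on each iteration.
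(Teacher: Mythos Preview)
Your proposal is correct and is exactly the approach the paper intends: it explicitly states that the proof of Theorem~\ref{t404} ``is similar to the proof of Theorem~\ref{t403}'', and your argument is the faithful row/column transposition of that proof, with Theorem~\ref{t302} playing the role of Theorem~\ref{t301} and Lemma~\ref{l201} applied to $A^T$. The decomposition $X^{(k+1)}-CA^+=(X^{(k)}-CA^+)(I_m-Q_j)+(Y^{(k+1)}-CA^+)Q_j$, the Pythagorean splitting, the invariance $((X^{(k)}-CA^+)_{i,:})^T\in R(A)$, and the final unrolling all match the paper's template.
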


The proof is similar to the proof of Theorem \ref{alg403}.

\begin{algorithm}
	\leftline{\caption{REGS Method for Inconsistent Matrix Equation $XA=C$ (REGSIXA)}\label{alg404}}
	\begin{algorithmic}[1]
		\Require
		$A\in R^{m\times n}$, $C\in R^{p\times n}$, $Y^{(0)}\in R^{p\times m}$, $(X^{(0)}_{i,:})^T\in R(A)$, $j=1,\cdots,m$, $R^{(0)}=C-Y^{(0)}A$, $K\in R$
		\State for $i=1:m$, $M(i)=\|A_{i,:}\|_2^2$
		\State for $j=1:n$, $N(j)=\|A_{:,j}\|_2^2$
		\For {$k=0,1,2,\cdots, K-1$}
		\State Set $p_{row=i}=\frac{\|A_{i,:}\|_2^2}{\|A \|^2_F}$, $p_{col=j}=\frac{\|A_{:,j}\|_2^2}{\|A \|^2_F}$
		\State Compute $U^{(k)}=\frac{R^{(k)}(A_{i,:})^T}{\|A_{i,:}\|_2^2}, \ Y^{(k+1)}_{:,i}=Y^{(k)}_{:,i}+U^{(k)}, \  R^{(k+1)}=R^{(k)}-U^{(k)}A_{i,:}$
		\State Compute $X^{(k+1)}=X^{(k)}-\frac{(X^{(k)}-Y^{(k+1)})A_{:,j}}{N(j)}(A_{:,j})^T$
		\EndFor
		\State Output $X^{(K)}$
	\end{algorithmic}
\end{algorithm}

\begin{remark}\label{r402}
	Especially, let $p=m$ and $B=I_m$, or $p=n$ and $C=I_n$, that is, consider $AX=I_m$, or $XA=I_n$,  we can solve MP inverse of arbitrary $A\in R^{m\times n}$ by extended GS method.
	Based on Theorem \ref{t403} ($B=I_m$) and Theorem \ref{t404} ($C=I_n$), we obtain the following result for arbitrary matrix $A\in R^{m\times n}$ (maybe rank defective)
	\begin{equation*}\label{e422}
		\lim\limits_{k\rightarrow\infty}X^{(k)}=A^+=\arg\min\limits_{X=\arg\min\|X\|_F}\|AX-I_m\|_F,
	\end{equation*}
	and
	\begin{equation*}\label{e423}
		\lim\limits_{k\rightarrow\infty}X^{(k)}=A^+=\arg\min\limits_{X=\arg\min\|X\|_F}\|AX-I_n\|_F.
	\end{equation*}
\end{remark}

\section{Recursive Methods for Matrix Equation}
When the solution of the matrix equation is found, it is often encountered that new data is added. So can we use the existing results instead of starting from scratch?

If the results of an additional experiment or observation become available after (\ref{e201}) is solved ($A^+B$ is the minimal $F$-norm solution or least square solution), then it is necessary to update the solution of the following equation (\ref{e501}) in light of the additional information,
\begin{equation}\label{e501}
	\tilde{A}X=\tilde{B},
\end{equation}
where $\tilde{A}=[A^T,a^T]^T$, $\tilde{B}=[B^T,b^T]^T$, $X\in R^{n\times p}$ and $a\in R^{1\times n}$, $b\in R^{1\times p}$. Based on \cite{Gre60}, we have
\begin{equation*}\label{e502}
	\tilde{A}^+=\left\{
	\begin{array}{lr}
		\left(A^+-(a-aA^+A)^+aA^+, \ \ (a-aA^+A)^+\right), & a-aA^+A\not=0,\\
		\left(A^+-\frac{(A^TA)^+a^TaA^+}{1+a(A^TA)^+a^T}, \ \ \frac{(A^TA)^+}{1+a(A^TA)^+a^T}\right), & a-aA^+A=0.
	\end{array}
	\right.
\end{equation*}
Therefore, the minimal $F$-norm solution or least square solution of Eq. (\ref{e501}) is
\begin{equation*}\label{e503}
	\tilde{X}=\tilde{A}^+\tilde{B}=A^+B+d(b-aA^+B),
\end{equation*}
where
\begin{equation*}\label{e504}
	d=\left\{
	\begin{array}{lr}
		(a-aA^+A)^+, & a-aA^+A\not=0,\\
		\frac{(A^TA)^+a^T}{1+a(A^TA)^+a^T}, & a-aA^+A=0.
	\end{array}
	\right.
\end{equation*}

In fact,
\begin{equation*}\label{e505}
	\bar{X}=A^+B+a^+(b-aA^+B)=A^+B+\frac{a^T}{\|a\|_2^2}(b-aA^+B)
\end{equation*}
is orthogonal projection of $A^+B$ onto the subspace $H=\left\{X\in R^{n\times p}: \ aX=b\right\}$. Therefore,

(1) If $aA^T=0$, i. e., data $a$ is orthogonal to all rows of $A$, $\tilde{X}=\bar{X}$;

(2) If $aA^+B\approx b$, we can use $\tilde{X}=CA^+$ as the initial iteration and continue to solve iteratively the new matrix equation (\ref{e501});

(3) If $m<<n$, we know that $I_n-A^+A\approx I_n$, the $\bar{X}$ can be used as the initial iteration and continue to solve iteratively the new matrix equation (\ref{e501}).

If new data is added to the column of the coefficient matrix $A$ after (\ref{e201}) is solved ($A^+B$ is the minimal $F$-norm solution or least square solution), we have similar methods to recursively solve new matrix equations
\begin{equation}\label{e506}
	X\tilde{A}=\tilde{C},
\end{equation}
where $\tilde{A}=[A,a_{n+1}]$, $\tilde{C}=[C,c_{n+1}]$, $a_{n+1}, \ c_{n+1}\in R^m$ and $X\in R^{p\times m}$. Based on \cite{Gre60}, we have
\begin{equation*}\label{e507}
	\tilde{A}^+=\left\{
	\begin{array}{lr}
		\left(\begin{array}{c}
			A^+-A^+a_{n+1}(a_{n+1}-AA^+a_{n+1})^+\\(a_{n+1}-AA^+a_{n+1})^+
		\end{array}\right), & a_{n+1}-AA^+a_{n+1}\not=0,\\
		\left(\begin{array}{c}
			A^+-\frac{A^+a_{n+1}a_{n+1}^T(AA^T)^+}{1+a_{n+1}^T(AA^T)^+a_{n+1}}\\ \frac{a_{n+1}^T(AA^T)^+}{1+a_{n+1}^T(AA^T)^+a_{n+1}}
		\end{array}\right), & a_{n+1}-AA^+a_{n+1}=0.
	\end{array}
	\right.
\end{equation*}
Therefore, the minimal $F$-norm solution or least square solution of Eq. (\ref{e506}) is
\begin{equation*}\label{e508}
	\tilde{X}=\tilde{C}\tilde{A}^+=CA^++(c_{n+1}-CA^+a_{n+1})d,
\end{equation*}
where
\begin{equation*}\label{e509}
	d=\left\{
	\begin{array}{lr}
		(a_{n+1}-AA^+a_{n+1})^+, & a_{n+1}-AA^+a_{n+1}\not=0,\\
		\frac{a_{n+1}^T(AA^T)^+}{1+a_{n+1}^T(AA^T)^+a_{n+1}}, &  a_{n+1}-AA^+a_{n+1}=0.
	\end{array}
	\right.
\end{equation*}
In fact,
\begin{equation*}\label{e510}
	\bar{X}=CA^++(c_{n+1}-CA^+a_{n+1})a_{n+1}^+
\end{equation*}
is orthogonal projection of $CA^+$ onto the subspace $H=\left\{X\in R^{n\times p}: \ Xa_{n+1}=c_{n+1}\right\}$. Therefore,

(1) If $A^Ta_{n+1}=0$, i. e., data $a_{n+1}$ is orthogonal to all columns of $A$, $\tilde{X}=\bar{X}$;

(2) If $CA^+a_{n+1}\approx c_{n+1}$, we can use $\tilde{X}=CA^+$ as the initial iteration and continue to solve iteratively the new matrix equation (\ref{e506});

(3) If $m>>n$, we know that $I_m-AA^+\approx I_m$, the $\bar{X}$ can be used as the initial iteration and continue to solve iteratively the new matrix equation (\ref{e506}).

\section{Numerical Experiments}
In this section, to verify the efficiency of the proposed algorithms, we will present some experiment results for matrix equations.
All experiments are carried out by using MATLAB (version R2020a) on a DESKTOP-8CBRR86
with Intel(R) Core(TM) i7-4712MQ CPU @2.30GHz   2.29GHz, RAM 8GB and Windows 10.

All computations are started from the initial guess $X^{(0)}=0, Y^{(0)}=0$, and terminated once the relative error (RE) of the solution, defined by
$$RE=\frac{\|X^{(k)}-X^*\|_F^2}{\|X^*\|_F^2}$$
at the the current iterate $X^{(k)}$, satisfies $RE<10^{-6}$ or exceeds maximum iteration $K=50000$, where $X^*$  represents the left inverse, the right inverse, $A^+$ or inverse for different matrices.
We report the average number of iterations (denoted as `IT') and the average computing time in second (denoted as `CPU') for 10 trials repeated runs of the corresponding methods.
In the following tables, the item `$>$' represents that the number of iteration steps exceeds the maximum iteration (50000), and the item `$-$' represents that the method does not converge.
We test the performance of various methods for the matrix equations $AX=B$ and $XA=C$ with synthetic dense  data and real-world sparse data.
\begin{itemize}
	\item Type I: For given $m,n$, the entries of $A$ is generated from a standard normal distribution, i.e., $A=randn(m,n).$ We also construct the rank-deficient matrix by $ A=randn(m,n/2), A=[A, A]$ or $ A=randn(m/2,n)$, $A=[A; A]$ and so on.
	\item Type II: The real-world sparse data come from the Florida sparse matrix collection \cite{DH11}. Table \ref{tab0} lists the  features of these sparse matrices.
\end{itemize}
\begin{table}[H]
	\caption{The detailed features of sparse matrices from \cite{DH11}.}
	\label{tab0}
	\centering
	\begin{tabular}{ c c c c  }
		\hline
		name & size  & rank   & sparsity     \\
		\hline
		ash219  & $219\times 85$   &  85  & $2.3529\%$  	\\
		\hline
		ash958 &  $958\times 292$  &  292  & $0.68493\%$  	\\
		\hline
		divorce &  $50\times 9$  &  9  & $50\%$  	\\
		\hline
		Worldcities &  $315\times 100$  &  100  & $53.625\%$  	\\
		\hline
	\end{tabular}
\end{table}

\subsection{Consistent Matrix Equation}
First, we compare the performance of the RK, REK, RGS and REGS methods for solving the consistent matrix equations $AX=B$ and $XA=C$. To construct two consistent matrix equations, we set $B=AX^*$ and $C=X^*A$, where $X^*$ is a random matrix which is generated by $X^*=randn(*,*)$.

\begin{example}\label{EX6.1}
	Random matrix. Synthetic dense data for this test is generated as follows: $A=randn(m,n), X^*_1=randn(n,p), X^*_2=randn(p,m), B=AX^*_1, C=X^*_2A$, and rank-deficient matrix $A=randn(m,n/2), A=[A, A]$. Numerical results are shown in Figure \ref{figure:6.1.1} and Table \ref{table:6.1.1}.
\end{example}

From Table \ref{table:6.1.1}, we can see that the RKCAX, RKCXA, RGSIAX and RGSIXA methods vastly outperform the REKIAX, REKIXA, REGSIAX and REGSIXA methods in terms of both IT and CPU times. The RKCAX and RKCXA methods have the least iteration steps and runs the least time regardless of whether the matrices A is full column/row rank or not. However, the RGSIAX and RGSIXA methods do not converge if the matrices A is not full column/row rank. As the increasing of matrix dimension, the CPU time of RKCAX, RKCXA, RGSIAX and RGSIXA is increasing slowly , while the running time of RKCAX, RKCXA, RGSIAX and RGSIXA increases dramatically.

Figure \ref{figure:6.1.1} shows the plots of relative error (RE) in base-10 logarithm versus IT and CPU of different methods with $AX=B$ ($m=50, n=30, p=30, A$ is full column rank) and $XA=C$ ($m=30, n=50, p=30, A$ is full row rank). We can see the relative errors of RKCAX, RKCXA, RGSIAX and RGSIXA are decreasing rapidly with the increase of iteration steps and the computing times.

\begin{example}\label{EX6.2}
	Real-world matrix. The entries of $A$ is selected from the real-world sparse data \cite{DH11}. Table \ref{tab0} lists the features of these sparse matrices. Let $X^*_1=randn(n,p), X^*_2=randn(p,m), B=AX^*_1, C=X^*_2A$.
	Numerical results are shown in Table \ref{table:6.2.1}.
\end{example}

For the sparse matrices from Type II, we list the numbers of iteration steps and the computing times for the RKCAX, RKCXA, RGSIAX, RGSIXA, REKIAX, REKIXA, REGSIAX, and REGSIXA methods in Table \ref{table:6.2.1}. For all cases in Table \ref{table:6.2.1}, the RKCAX, RKCXA, REKIAX, REKIXA, REGSIAX, and REGSIXA methods all converge to the solution, but the RKCAX and RKCXA methods are significantly better than the REKIAX, REKIXA, REGSIAX, and REGSIXA methods, both in terms of iteration steps and running time. For $A=$divorce, ash219, Worldcities, and ash958, the RGSIXA method does not converge because $A$ is not full row rank. For $A=$divorce$^{\top}$, ash219$^{\top}$, Worldcities$^{\top}$, and ash958$^{\top}$, the RRGSIAX method does not converge because $A$ is not full column rank.

\subsection{Inconsistent Matrix Equation}
Next, we compare the performance of the the RGSIAX, RGSIXA, REKIAX, REKIXA, REGSIAX, and REGSIXA methods for solving the inconsistent matrix equations $AX=B$ and $XA=C$. To construct two inconsistent matrix equations, we set $B=AX^*+R$ and $C=X^*A+R$, where $X^*$ and $R$ are random matrices which are generated by $X^*=randn(*, *)$ and $R=\delta\times randn(*, *), \delta \in \left(0, 1\right)$.

\begin{example}\label{EX6.3}
	Random matrix. Synthetic dense data for this test is generated as follows: $A=randn(m,n), X^*_1=randn(n,p), X^*_2=randn(p,m), B=AX^*_1+R_1, C=X^*_2A+R_2$ and $R_1=\delta\times randn(m,p), R_2=\delta \times randn(p,n)$ where $\delta = 10^{-5}$, and rank-deficient matrix $A=randn(m,n/2)$, $A=[A, A]$.
	Numerical results are shown in Figure \ref{figure:6.3.1} and Table \ref{table:6.3.1}.
\end{example}


In Table \ref{table:6.3.1}, we report the average IT and CPU of the RGSIAX, RGSIXA, REKIAX, REKIXA, REGSIAX, and REGSIXA methods for solving inconsistent matrix with Type I matrices.
We can see that the RGSIAX and RGSIXA methods are better than REKIAX, REKIXA, REGSIAX, and REGSIXA in terms of IT and CPU time. The fly in the ointment is that the convergence conditions of the RGSIAX and RGSIXA method are more stringent, which requires $A$ of $AX=B$ is full column rank and A of $XA=C$ is full row rank.  The REKIAX, REKIXA, REGSIAX, and REGSIXA methods can successfully solve the linear least-squares solution for all cases. Figure \ref{figure:6.3.1} shows the plots of relative error (RE) in base-10 logarithm versus IT and CPU of different methods with $AX=B$ ($m=50, n=30, p=30, A$ is full column rank) and $XA=C$ ($m=30, n=50, p=30, A$ is full row rank). Again, we can find the RGSIAX and RGSIXA methods converge faster than the REKIAX, REKIXA, REGSIAX, and REGSIXA methods.

\begin{example}\label{EX6.4}
	Real-world matrix. The entries of $A$ is selected from the real-world sparse data \cite{DH11}. Table \ref{tab0} lists the features of these sparse matrices. Let $X^*_1=randn(n,p), X^*_2=randn(p,m), p=10, B=AX^*_1+R_1, C=X^*_2A+R_2$ and $R_1=\delta \times randn(m,p), R_2=\delta\times randn(p,n)$ where $\delta = 10^{-5}$.
	Numerical results are shown in Table \ref{table:6.4.1}.
\end{example}

In Table \ref{table:6.4.1}, we list the average IT and CPU of the RGSIAX, RGSIXA, REKIAX, REKIXA, REGSIAX, and REGSIXA methods for solving inconsistent matrix with sparse matrices. We can observe that the RGSIAX and RGSIXA methods require less CPU and IT than REKIAX, REKIXA, REGSIAX, and REGSIXA methods.

\section{Conclusion}
We have proposed a series of Kaczmarz-type methods: RK method, REK method, RGS method and REGS method for solving the matrix $AX=B$ and $XA=C$. Same times, these methods can also be used to finding the right inverse, left inverse and Moore-Penrose generalized inverse of a matrix. These methods avoid calculating the product of matrix and matrix and are suitable for large-scale problems. The convergence of the random algorithms of these methods are also guaranteed. The numerical results show that these methods are very efficient and all these algorithms (RKCAX, RKCXA, RGSIAX, RGAIXA,  REKIAX, REKIXA, REGSIAX and REGSIXA) can be selected according to different situations.

\begin{figure}[htbp]
	\centering
	\includegraphics[width=4cm]{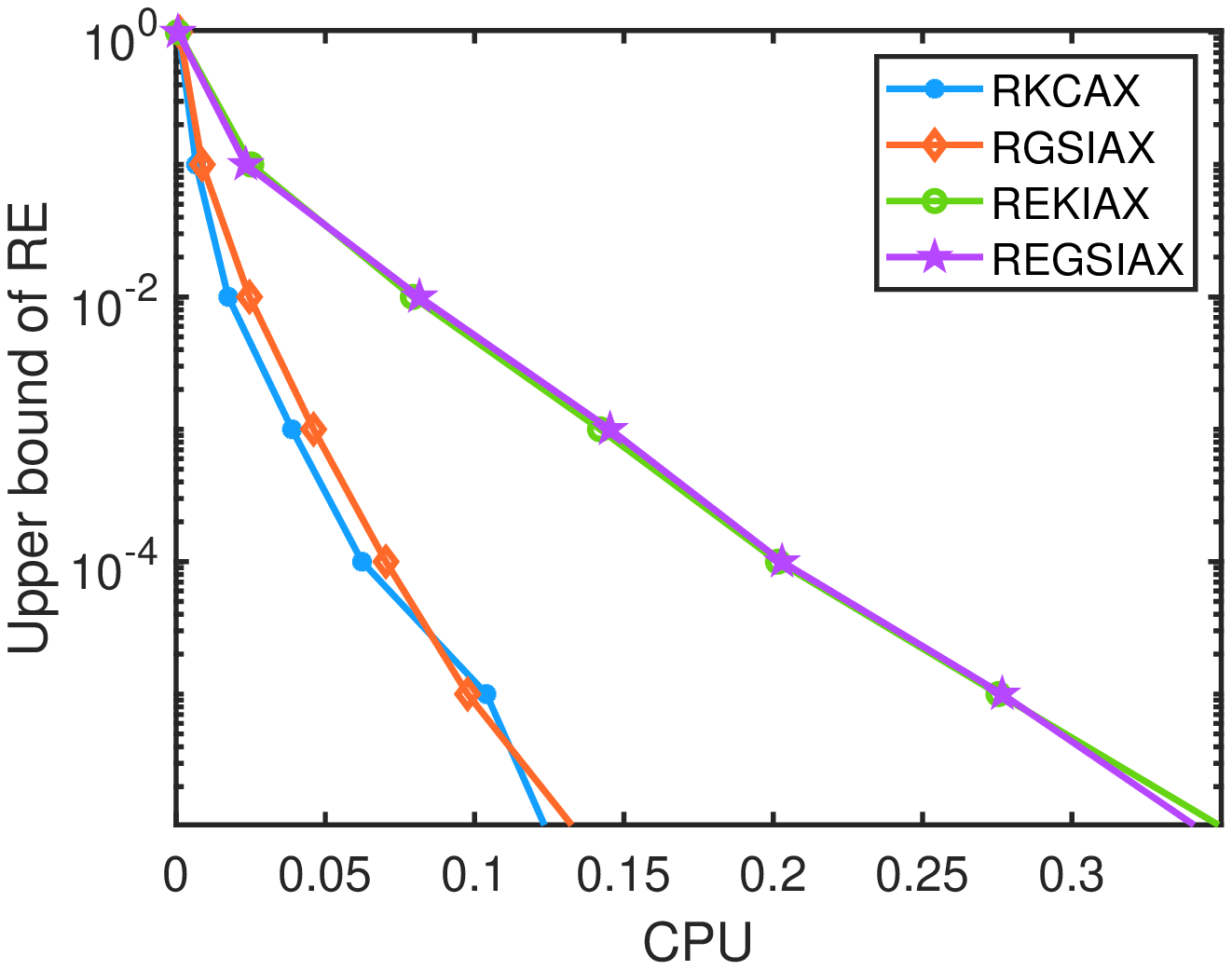}
	\includegraphics[width=4cm]{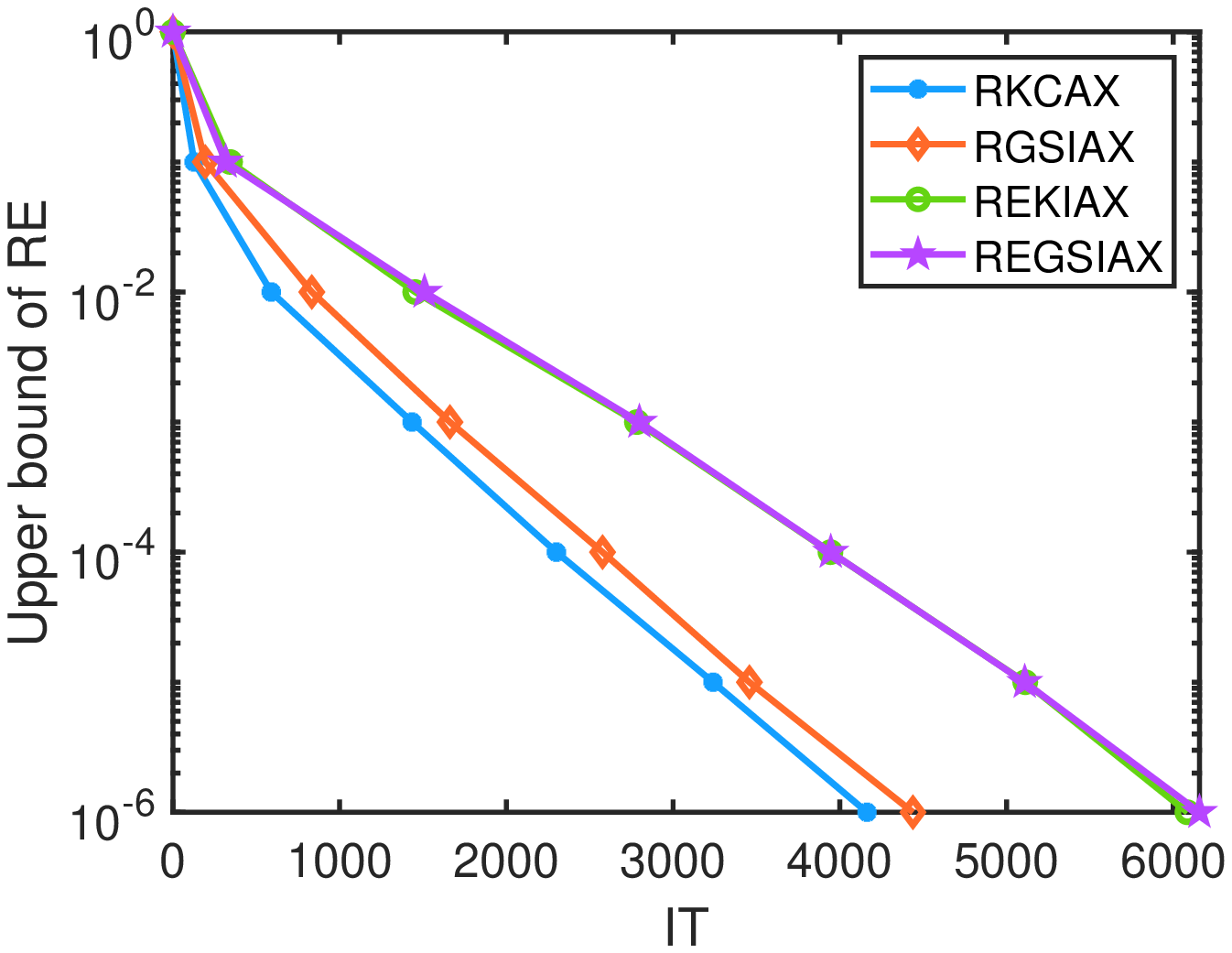}
	\includegraphics[width=4cm]{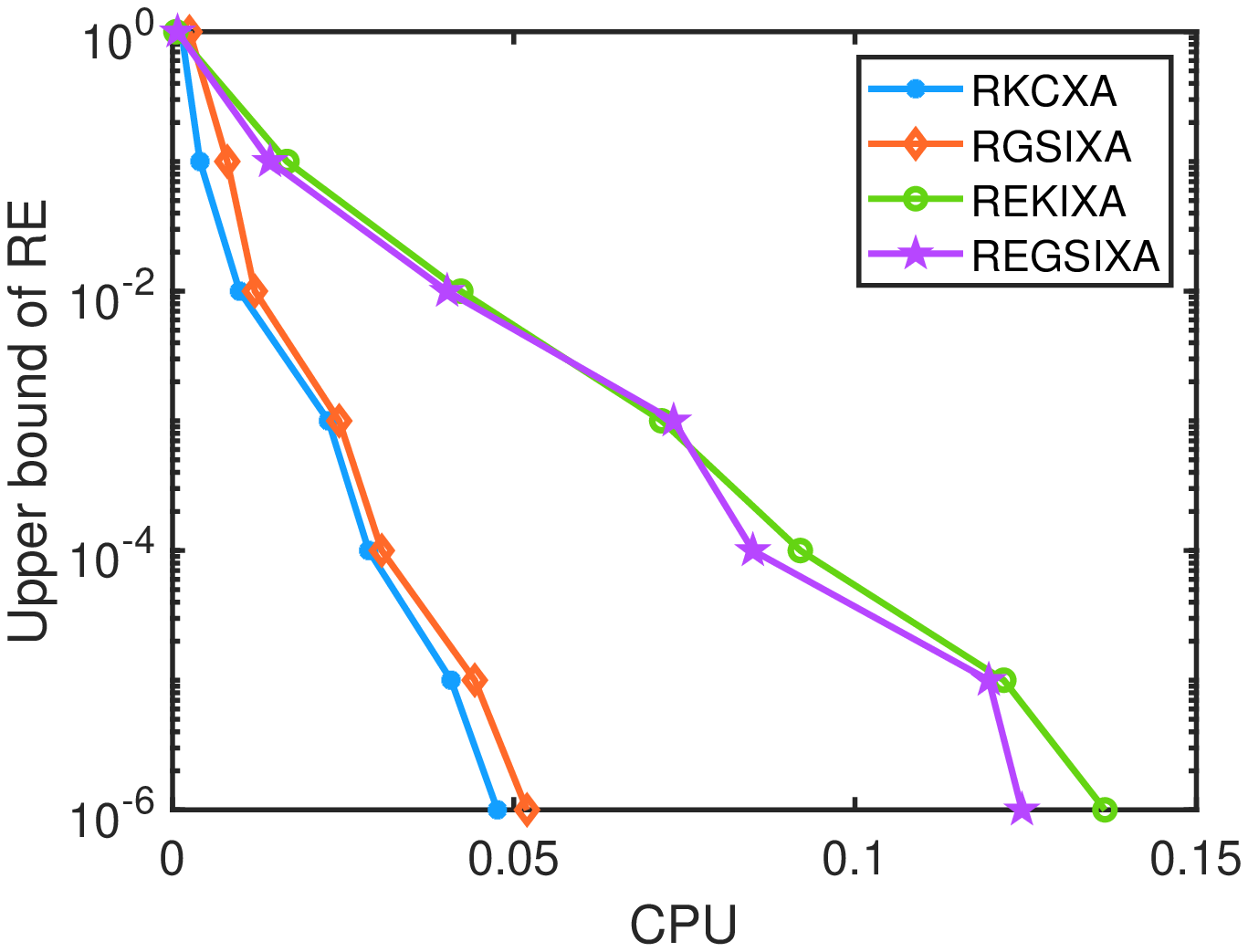}
	\includegraphics[width=4cm]{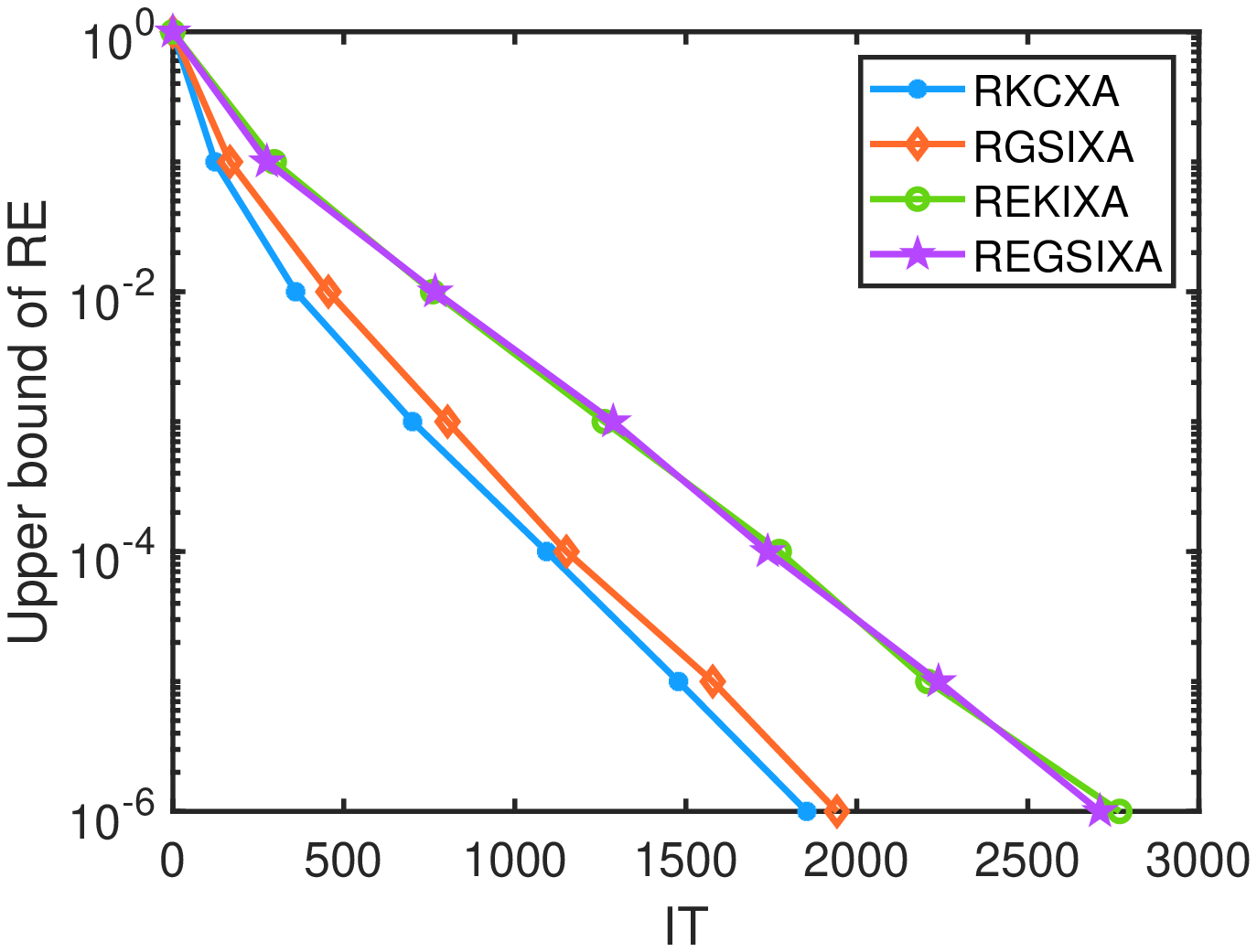}
	\caption{Relative errors of different methods for consistent matrix equations $AX=B$ and $XA=C$.}\label{figure:6.1.1}
\end{figure}

\begin{figure}[htbp]
	\centering
	\includegraphics[width=4cm]{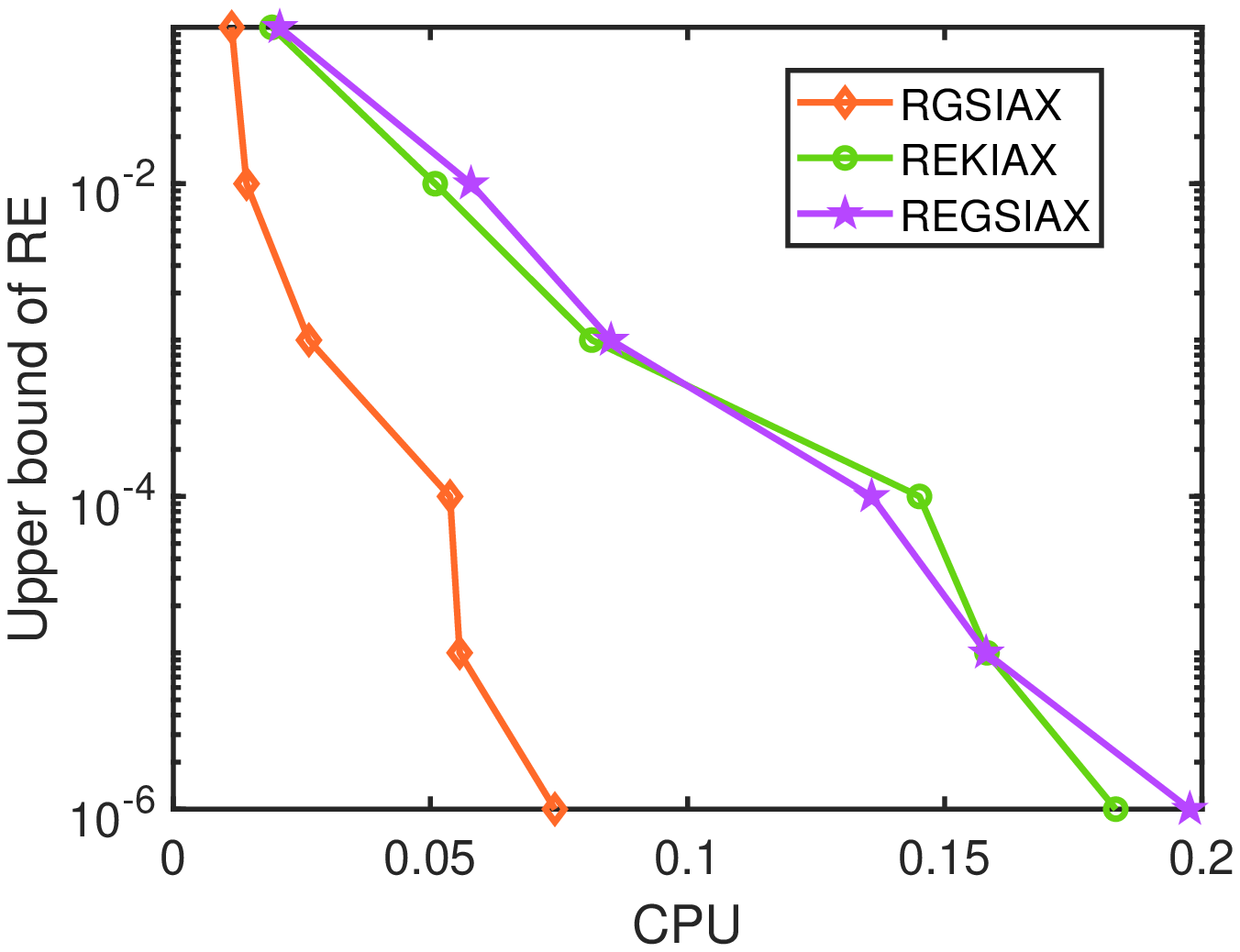}
	\includegraphics[width=4cm]{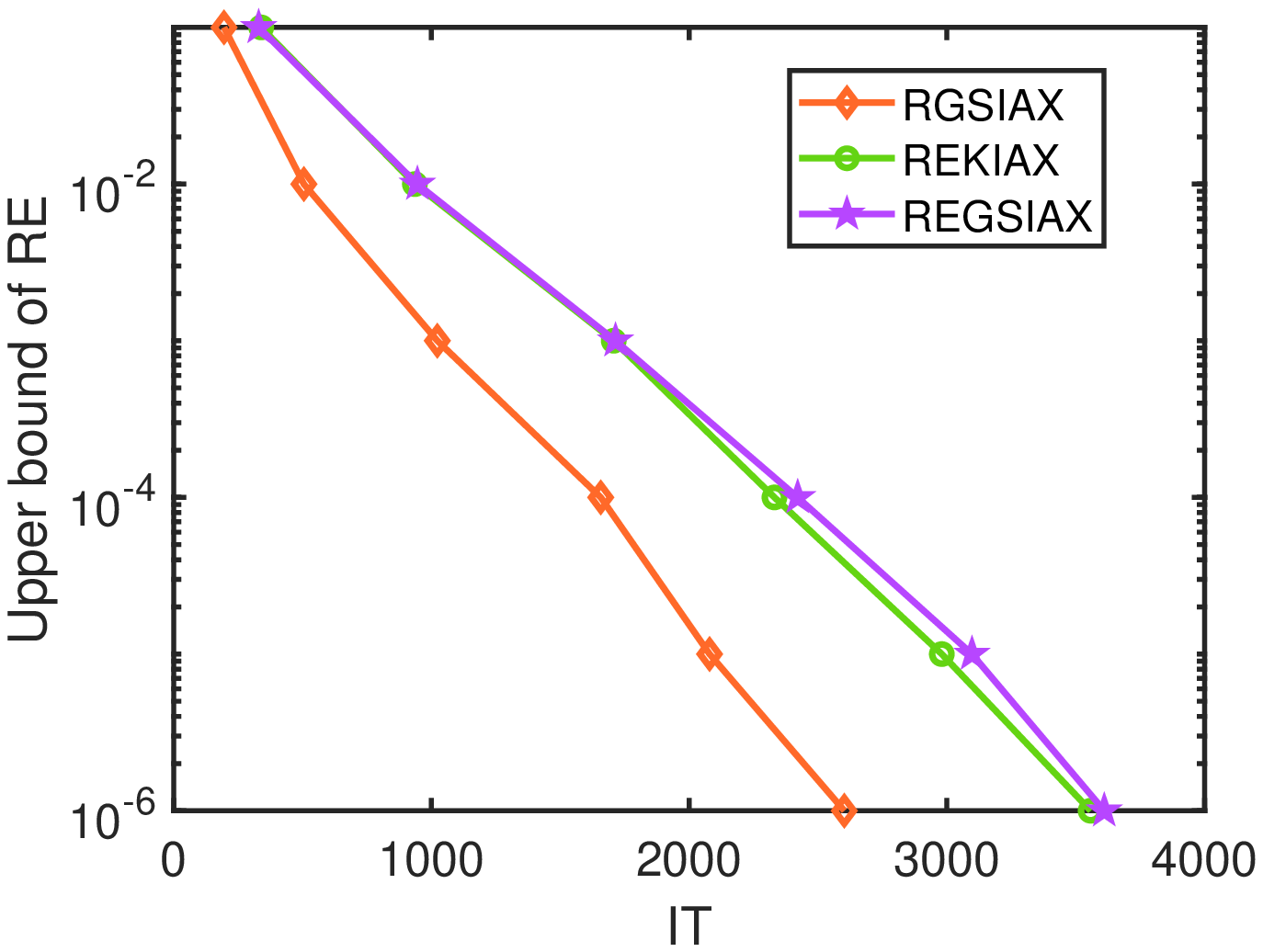} 		\includegraphics[width=4cm]{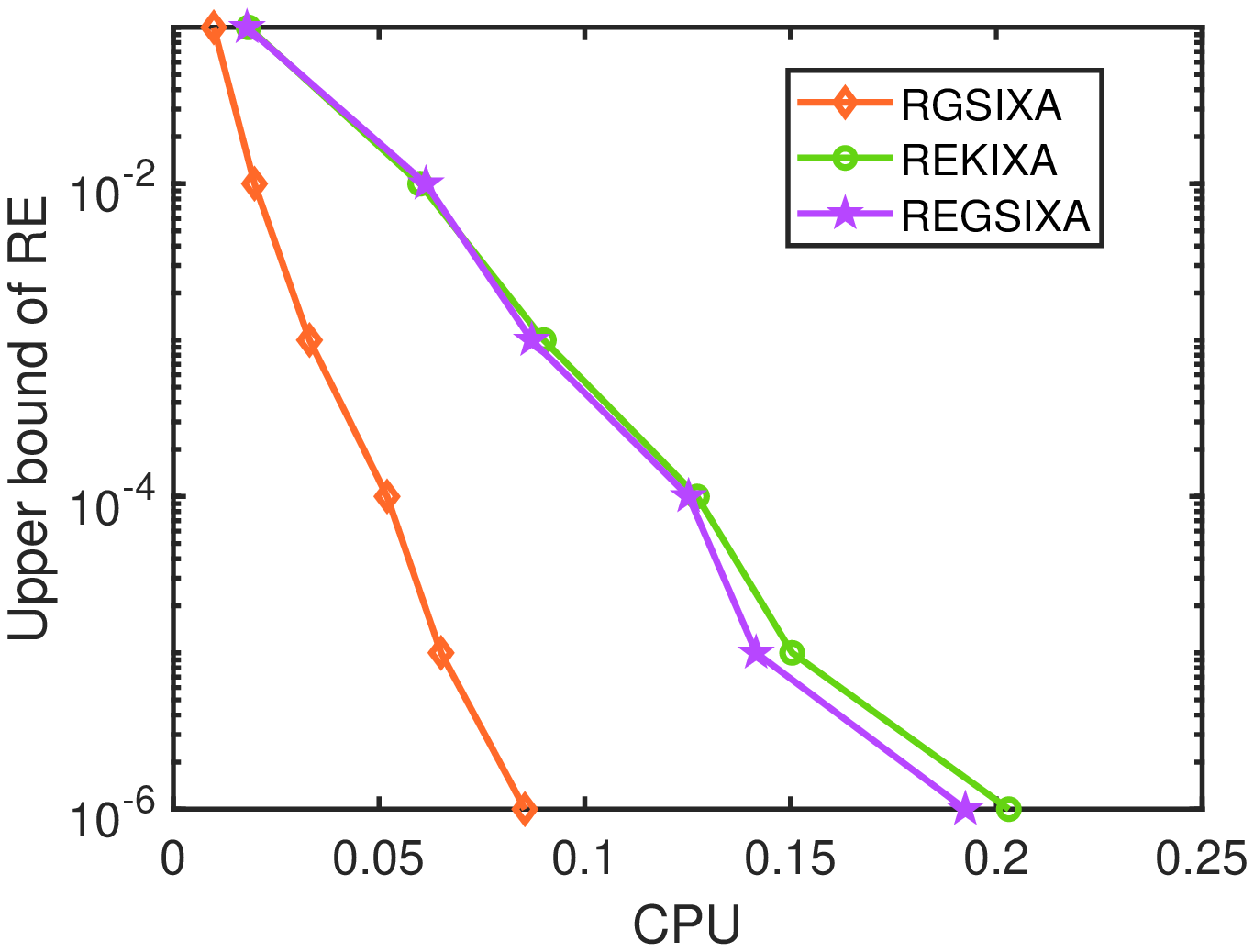}
	\includegraphics[width=4cm]{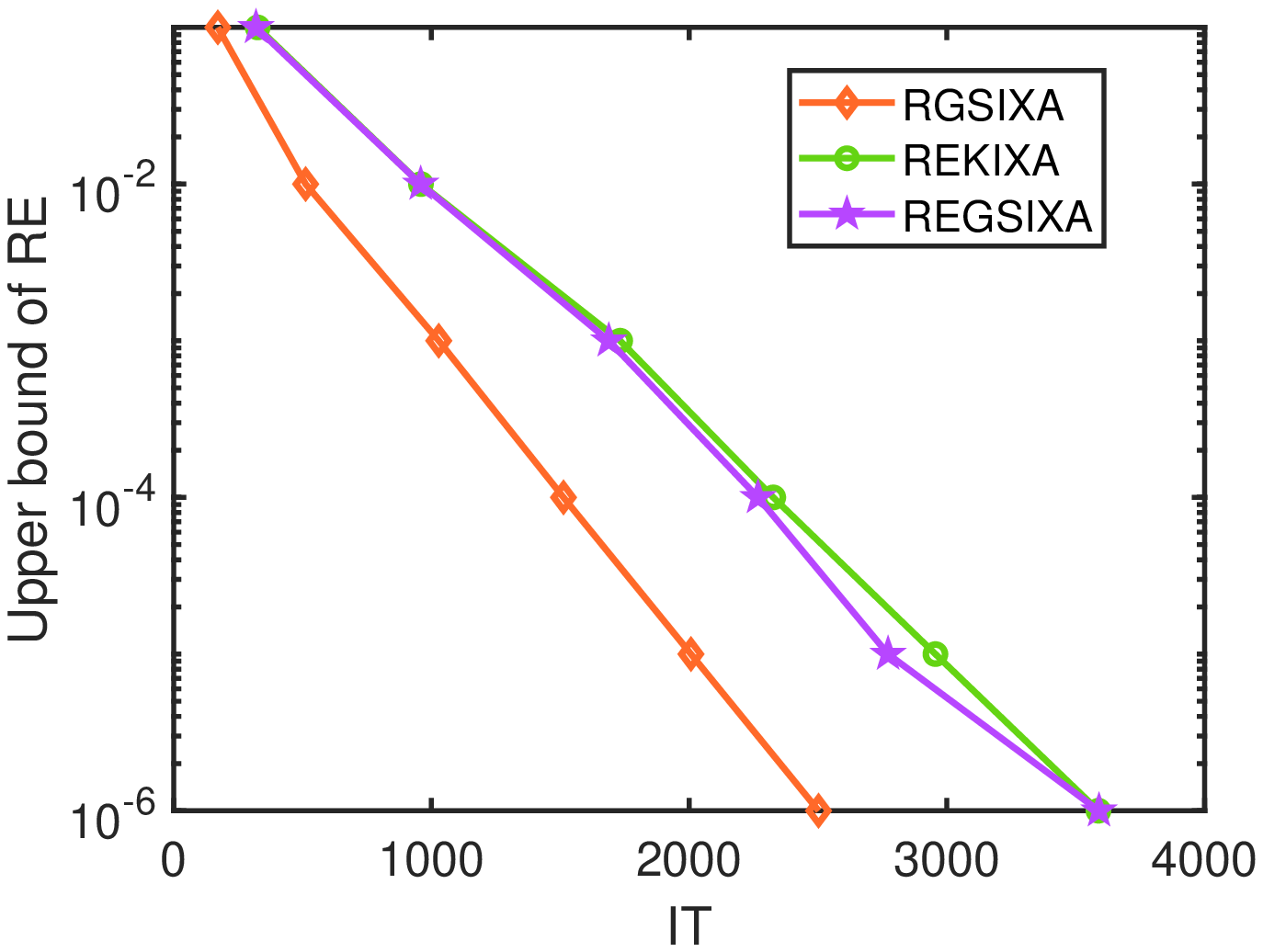}
	\caption{Relative errors of different methods for inconsistent matrix equations $AX=B$ and $XA=C$.}\label{figure:6.3.1}
\end{figure}

\begin{table}[htbp]
	\footnotesize
	\resizebox{\textwidth}{!}{
		\begin{threeparttable}
			\caption{The average CPU and IT of RKCAX, RKCXA, RGSIAX, RGSIXA, REKIAX, REKIXA, REGSIAX, and REGSIXA for solving consistent matrix equations for Type I.}
			\label{table:6.1.1}
			\begin{tabular}{ccccccccccccc}
				\toprule  
				m & n & p & rank(A) &     & RKCAX  & RKCXA   & RGSIAX   & RGSIXA & REKIAX   & REKIXA    & REGSIAX  & REGSIXA  \\
				\hline
				\multirow{2}*{50} & \multirow{2}*{30} & \multirow{2}*{30} & \multirow{2}*{30}
				& CPU  & 0.1236  & 0.1206   & 0.1329  & $-$   & 0.3499  & 0.3608  & 0.3416   & 0.3292     \\
				&  &  &    & IT   & 4163    & 4079     & 4439    & $-$   & 6085    & 6057    & 6157     & 6063    \\	
				\hline
				\multirow{2}*{30} & \multirow{2}*{50} & \multirow{2}*{30} & \multirow{2}*{30}
				& CPU  & 0.0511  & 0.0476   & $-$    & 0.0519   & 0.1301   & 0.1366   & 0.1324  & 0.1244     \\
				&  &  &    & IT   & 1915    & 1854     & $-$    & 1942     & 2686     & 2769     & 2725    & 2711   \\
				\hline
				\multirow{2}*{50} & \multirow{2}*{30} & \multirow{2}*{30} & \multirow{2}*{15}
				& CPU  & 0.0101   & 0.0097   & $-$     & $-$    & 0.0254  & 0.0259   & 0.0248  & 0.0232      \\
				&  &  &    & IT   & 357      & 347      & $-$     & $-$    & 510     & 501      & 503     & 474    \\
				\hline
				\multirow{2}*{100} & \multirow{2}*{60} & \multirow{2}*{60} & \multirow{2}*{60}
				& CPU  & 0.1902   & 0.2725   & 0.2118   & $-$     & 0.6184   & 0.6799   & 0.5406   & 0.5678    \\
				&  &  &    & IT   & 4717     & 4887     & 5111     & $-$     & 7092     & 7226     & 7134     & 7159   \\
				\hline
				\multirow{2}*{60} & \multirow{2}*{100} & \multirow{2}*{60} & \multirow{2}*{60}
				& CPU  & 0.1990   & 0.1944   & $-$     & 0.1913    & 0.5722    & 0.5558   & 0.5064    & 0.4534    \\
				&  &  &    & IT   & 4502     & 4382     & $-$     & 4690      & 6579      & 6560     & 6513      & 6499  \\
				\hline
				\multirow{2}*{100} & \multirow{2}*{60} & \multirow{2}*{60} & \multirow{2}*{30}
				& CPU  & 0.0237  & 0.0306    & $-$   & $-$      & 0.0719     & 0.0761    & 0.0618   & 0.0626    \\
				&  &  &    & IT   & 690     & 658       & $-$   & $-$      & 911        & 932       & 922      & 948    \\
				\hline
				\multirow{2}*{200} & \multirow{2}*{100} & \multirow{2}*{100} & \multirow{2}*{100}
				& CPU  & 0.4867  & 0.6250   & 0.5580   & $-$    & 1.2811   & 1.4384    & 1.3371   & 1.5524    \\
				&  &  &    & IT   & 4962    & 5044     & 5301     & $-$    & 7242     & 7262      & 7256     & 7357    \\
				\hline
				\multirow{2}*{100} & \multirow{2}*{200} & \multirow{2}*{100} & \multirow{2}*{100}
				& CPU  & 0.5993  & 0.4757   & $-$   & 0.5766    & 1.3872    & 1.2776    & 1.4842   & 1.3001    \\
				&  &  &    & IT   & 5100    & 5098     & $-$   & 5376      & 7384      & 7427      & 7397     & 7456    \\
				\hline
				\multirow{2}*{200} & \multirow{2}*{100} & \multirow{2}*{100} & \multirow{2}*{50}
				& CPU  & 0.1148  & 0.1446   & $-$    & $-$     & 0.3008    & 0.3148    & 0.2833   & 0.3134    \\
				&  &  &    & IT   & 1142    & 1101     & $-$    & $-$     & 1612      & 1565      & 1556     & 1558    \\
				\bottomrule 
			\end{tabular}
		\end{threeparttable}
	}
\end{table}

\begin{table}[htbp]
	\footnotesize
	\resizebox{\textwidth}{!}{
		\begin{threeparttable}
			\caption{The average CPU and IT of RKCAX, RKCXA, RGSIAX, RGSIXA, REKIAX, REKIXA, REGSIAX, and REGSIXA for solving consistent matrix equation for Type II.}
			\label{table:6.2.1}
			\begin{tabular}{ccccccccccccc}
				\toprule  
				A   &     & RKCAX  & RKCXA   & RGSIAX   & RGSIXA & REKIAX   & REKIXA    & REGSIAX  & REGSIXA  \\
				\hline
				\multirow{2}*{divorce}
				& CPU  & 0.0723  & 0.0754   & 0.0792  & $-$   & 0.1940  & 0.2006  & 0.2048   & 0.1973     \\
				& IT   & 2925    & 3059     & 3242    & $-$   & 4261    & 4251    & 4360     & 4261    \\	
				\hline
				\multirow{2}*{divorce$^{\top}$}
				& CPU  & 0.0950  & 0.0805   & $-$    & 0.0941   & 0.2300  & 0.2576   & 0.2473  & 0.2419     \\
				& IT   & 2921    & 2874     & $-$    & 3273     & 4298    & 4595     & 4117    & 4314    \\
				\hline
				\multirow{2}*{ash219}
				& CPU  & 0.0693   & 0.0898   & 0.0731   & $-$    & 0.1677  & 0.2183   & 0.1814  & 0.2509      \\
				& IT   & 1966     & 1790     & 2080     & $-$    & 2553    & 2562     & 2518    & 2838    \\
				\hline
				\multirow{2}*{ash219$^{\top}$}
				& CPU  & 0.0714   & 0.0613   & $-$    & 0.0761    & 0.1832   & 0.1792   & 0.1685   & 0.1840    \\
				& IT   & 2061     & 1883     & $-$    & 1908      & 2836     & 2753     & 2537     & 2550    \\
				\hline
				\multirow{2}*{Worldcities}
				& CPU  & 1.1540     & 2.2988    & 1.2106     & $-$    & 3.6571   & 6.1188    & 3.3258     & 4.9767    \\
				& IT   & 34738      & 38719     & 39211      & $-$    & 52273    & 56721     & 53722      & 56789   \\
				\hline
				\multirow{2}*{Worldcities$^{\top}$}
				& CPU  & 1.2753     & 1.2363    & $-$    & 1.9013     & 4.1061    & 4.3065    & 3.5750     & 4.7064   \\
				& IT   & 36365      & 38426     & $-$    & 40501      & 54252     & 56644     & 53114      & 57918   \\
				\hline
				\multirow{2}*{ash958}
				& CPU  & 0.3315     & 0.5544    & 0.3314     & $-$    & 0.9214    & 1.4136    & 0.9256     & 1.4477    \\
				& IT   & 6421       & 6002      & 5788       & $-$    & 8444      & 8402      & 8498       & 8010   \\
				\hline
				\multirow{2}*{ash958$^{\top}$}
				& CPU  & 0.9026     & 0.8464    & $-$     & 0.8674     & 1.9521    & 2.4070    & 2.1676     & 2.0962    \\
				& IT   & 6283       & 6133      & $-$     & 5785       & 8758      & 8348      & 8007       & 8216   \\
				\bottomrule 
			\end{tabular}
		\end{threeparttable}
	}
\end{table}

\begin{table}[htbp]
	\footnotesize
	\resizebox{\textwidth}{!}{
		\begin{threeparttable}
			\caption{The average CPU and IT of RGSIAX, RGSIXA, REKIAX, REKIXA, REGSIAX, and REGSIXA for solving inconsistent matrix for Type I.}
			\label{table:6.3.1}
			\begin{tabular}{ccccccccccccc}
				\toprule  
				m & n & p & rank(A) &               & RGSIAX   & RGSIXA & REKIAX   & REKIXA     & REGSIAX  & REGSIXA  \\
				\hline
				\multirow{2}*{50} & \multirow{2}*{30} & \multirow{2}*{30} & \multirow{2}*{30}
				& CPU     & 0.0742   & $-$    & 0.1833  & 0.1892   & 0.1977  & 0.1959      \\
				&  &  &    & IT      & 2602     & $-$    & 3558    & 3511     & 3611    & 3585    \\
				\hline
				\multirow{2}*{30} & \multirow{2}*{50} & \multirow{2}*{30} & \multirow{2}*{30}
				& CPU     & $-$    & 0.0854    & 0.1900   & 0.2030   & 0.2072   & 0.1925    \\
				&  &  &    & IT      & $-$    & 2502      & 3402     & 3588     & 3489     & 3590    \\
				\hline
				\multirow{2}*{50} & \multirow{2}*{30} & \multirow{2}*{30} & \multirow{2}*{15}
				& CPU     & $-$     & $-$   & 0.0291  & 0.0350  & 0.0365   & 0.0333     \\
				&  &  &    & IT      & $-$     & $-$   & 575     & 608     & 616      & 604    \\	
				\hline
				\multirow{2}*{100} & \multirow{2}*{60} & \multirow{2}*{60} & \multirow{2}*{60}
				& CPU     & 0.1971   & $-$    & 0.5141   & 0.6265    & 0.5439   & 0.5899   \\
				&  &  &    & IT      & 4952     & $-$    & 6888     & 6993      & 7013     & 6918    \\
				\hline
				\multirow{2}*{60} & \multirow{2}*{100} & \multirow{2}*{60} & \multirow{2}*{60}
				& CPU     & $-$   & 0.4803    & 0.9187    & 0.9225    & 0.8883   & 0.7187    \\
				&  &  &    & IT      & $-$   & 5628      & 9144      & 9093      & 9221     & 9146    \\
				\hline
				\multirow{2}*{100} & \multirow{2}*{60} & \multirow{2}*{60} & \multirow{2}*{30}
				& CPU     & $-$      & $-$    & 0.0987   & 0.0983    & 0.0713   & 0.0871    \\
				&  &  &    & IT      & $-$      & $-$    & 923      & 946       & 907      & 900    \\
				\hline
				\multirow{2}*{200} & \multirow{2}*{100} & \multirow{2}*{100} & \multirow{2}*{100}
				& CPU     & 0.5820   & $-$     & 1.3477    & 1.5606    & 1.4141   & 1.6983    \\
				&  &  &    & IT      & 5239     & $-$     & 7372      & 7356      & 7403     & 7343    \\
				\hline
				\multirow{2}*{100} & \multirow{2}*{200} & \multirow{2}*{100} & \multirow{2}*{100}
				& CPU     & $-$   & 1.1977    & 1.4616     & 1.3366   & 1.6726   & 1.6002    \\
				&  &  &    & IT      & $-$   & 5450      & 7624       & 7500     & 7618     & 7550    \\
				\hline
				\multirow{2}*{200} & \multirow{2}*{100} & \multirow{2}*{100} & \multirow{2}*{50}
				& CPU     & $-$    & $-$    & 0.2044   & 0.2402    & 0.2080  & 0.2486   \\
				&  &  &    & IT      & $-$    & $-$    & 1390     & 1415      & 1370    & 1423    \\
				\bottomrule 
			\end{tabular}
		\end{threeparttable}
	}
\end{table}

\begin{table}[htbp]
	\footnotesize
	\resizebox{\textwidth}{!}{
		\begin{threeparttable}
			\caption{The average CPU and IT of RGSIAX, RGSIXA, REKIAX, REKIXA, REGSIAX, and REGSIXA for solving inconsistent matrix equation for Type II.}
			\label{table:6.4.1}
			\begin{tabular}{ccccccccccccc}
				\toprule  
				A   &       & RGSIAX   & RGSIXA & REKIAX   & REKIXA    & REGSIAX  & REGSIXA  \\
				\hline
				\multirow{2}*{divorce}
				& CPU     & 0.0965  & $-$   & 0.2182  & 0.2145  & 0.2186   & 0.2124     \\
				& IT      & 3128    & $-$   & 4474    & 4316    & 4406     & 4228    \\	
				\hline
				\multirow{2}*{divorce$^{\top}$}
				& CPU    & $-$    & 0.0835   & 0.2218  & 0.2192   & 0.2424  & 0.2086     \\
				& IT     & $-$    & 3111     & 4507    & 4319     & 4531    & 4329    \\
				\hline
				\multirow{2}*{ash219}
				& CPU    & 0.0608   & $-$    & 0.1620  & 0.2267   & 0.1843  & 0.2111      \\
				& IT     & 1974     & $-$    & 2698    & 2818     & 2868    & 2616    \\
				\hline
				\multirow{2}*{ash219$^{\top}$}
				& CPU    & $-$    & 0.1069    & 0.1794   & 0.1544   & 0.1739   & 0.1883    \\
				& IT     & $-$    & 1893      & 2882     & 2527     & 2728     & 2508    \\
				\hline
				\multirow{2}*{Worldcities}
				& CPU   & 1.3091    & $-$    & 3.6970    & 5.9668    & 3.8696     & 5.2018    \\
				& IT    & 39696     & $-$    & 56191     & 54984     & 57586      & 55683    \\
				\hline
				\multirow{2}*{Worldcities$^{\top}$}
				& CPU   & $-$    & 3.5204    & 4.8339    & 4.3504    & 4.2900     & 4.9784    \\
				& IT    & $-$    & 3.9962    & 57669     & 55742     & 57501      & 55740    \\
				\hline
				\multirow{2}*{ash958}
				& CPU   & 0.3721    & $-$    & 0.9404    & 1.5042    & 0.9446     & 1.6585    \\
				& IT    & 5948      & $-$    & 8126      & 8339      & 8045       & 8336    \\
				\hline
				\multirow{2}*{ash958$^{\top}$}
				& CPU   & $-$    & 3.4370    & 1.3624    & 1.3549    & 1.6021     & 1.4762    \\
				& IT    & $-$    & 6112      & 8297      & 8424      & 8831       & 8381    \\
				\bottomrule 
			\end{tabular}
		\end{threeparttable}
	}
\end{table}	
\section*{Disclosure statement}

No potential conflict of interest was reported by the authors.

\bibliographystyle{tfs}
\bibliography{interacttfssample}

\end{document}